\def\zz{\mathbb{Z}}
\newcommand{\ft}{\ensuremath{F_\tau}}
\newcommand{\ttt}{\ensuremath{T_\tau}}
\newcommand{\vt}{\ensuremath{V_\tau}}
\newcommand{\txz}{\ensuremath{T_{xz}}}
\newcommand{\vxz}{\ensuremath{V_{xz}}}
\def\fb{F_\beta}
\def\vb{V_\beta}
\def\eps{\varepsilon}
\newcommand{\quot}[2]{{\left.\raisebox{.2em}{$#1$}\middle/\raisebox{-.2em}{$#2$}\right.}}
\def\qed{\hfill\rlap{$\sqcup$}$\sqcap$\par}
\newtheorem{thm}{Theorem}[section]
\newtheorem{prop}[thm]{Proposition}
\newtheorem{lem}[thm]{Lemma}
\newtheorem{cor}[thm]{Corollary}
\newtheorem{defn}[thm]{Definition}
\newtheorem{question*}{Question}
\title{Irrational-slope versions of Thompson's groups $T$ and $V$}
\author{Jos\'e Burillo }
\address{Departament de Matem\`atiques,
Universitat Polit\`ecnica de Catalunya,
Jordi Girona 1-3,
08034 Barcelona, Spain}
\email{pep.burillo@upc.edu}
\author{Brita Nucinkis}
\address{Department of Mathematics, Royal Holloway, University of London,
Egham, TW20 0EX, UK}
\email{Brita.Nucinkis@rhul.ac.uk}
\author{Lawrence Reeves}
\address{School of Mathematics and Statistics,
University of Melbourne,
VIC 3010, Australia}
\email{lreeves@unimelb.edu.au}
\date{}
\begin{document}

\maketitle

\begin{abstract} In this paper we consider the $T$- and $V$- versions, $\ttt$ and $\vt$, of the irrational slope Thompson group $\ft$ considered in \cite{ftau}. We give infinite presentations for these groups and show how they can be represented by tree-pair diagrams similar to those for $T$ and $V.$ We also show that $\ttt$ and $\vt$ have index-$2$ normal subgroups, unlike their original Thompson counterparts $T$ and $V.$ These index-$2$ subgroups are shown to be simple.

\end{abstract}

\section*{Introduction} This paper is a continuation of the authors' previous paper \cite{ftau}, where they
studied in detail the irrational-slope Thompson's group \ft\ introduced by Cleary in \cite{clearyirr}, a group of piecewise linear maps of the interval with irrational slopes that are powers of the small golden ratio
$\tau=(\sqrt{5}-1)/2$
%$\tau=0.618\ldots$
and breakpoints in the ring $\zz[\tau]$. In the standard fashion in the family of Thompson's groups, that study is extended here to the $T$ and $V$ versions, analogously defined via maps on the circle or left-continuous maps of the interval. The traditional Thompson's groups $T$ and $V$ are simple, but the irrational-slope versions  have subgroups of index two. This is phenomenon stems from the special relation in the group, which  introduces 2-torsion in the abelianisation of \ft. Finally, we mention the possibility of taking a $V_3$-version with irrational slopes to construct Thompson-like groups with normal subgroups of index 4.

The paper is organised as follows: We begin by defining the groups and indicate how to work with their elements. Then we go into the specifics for each group, beginning with \ttt\, where we give a presentation and the proof of the simplicity of the index-two subgroup. Then follows a section for \vt\, which also contains a presentation and the proof of the simplicity of the index-two subgroup, although the proof of simplicity of that subgroup is substantially different than that for \ttt. We also explore the possibility of the index-four subgroups for the three-caret version.

Also note that these groups are of type $F_\infty.$ The result for $\ft$ was proven in \cite{clearyirr}, see \cite{clearyirrold} for detail. The extension of the result to $\ttt$ and $\vt$ follows by directly applying the methods of Stein \cite{stein}. These methods are by now standard, see, for example, \cite{FMWZ, KoMN4, MMN}, and we will not present them here.

 This paper is the natural continuation of \cite{ftau}, and many results and definitions are taken directly from that paper.

\section{The groups \ttt\ and \vt\ }

Let $\tau$ be the small golden ratio, namely $\tau=(\sqrt5-1)/2=0.618...$, which satisfies that $1=\tau+\tau^2$. In \cite{clearyirr}, Cleary introduced the group \ft, the irrational Thompson group, with breaks in $\zz[\tau]$ and slopes powers of $\tau$. From the equality $1=\tau+\tau^2$, we subdivide an interval of length $\tau^n$ into one of length $\tau^{n+1}$ and one of length $\tau^{n+2}$. The group \ft\ is the group of piecewise linear maps on the interval with these breaks and slopes, see \cite{ftau} for details.

As it is standard in the Thompson family, we construct the irrational slope versions corresponding to the groups $T$ and $V$. For the group \ttt, one can consider maps on the circle instead of the interval. The circle will be obtained by identifying the two endpoints of the interval $[0,1]$, so we can consider maps of the interval such that the images of 0 and 1 coincide. So the group \ttt\ is the group of piecewise-linear, orientation-preserving homeomorphisms of the circle such that the breakpoints are in $\zz[\tau]$ and the slopes of the linear parts are powers of $\tau$. The $V$-version \vt\ consists of the left-continuous, piecewise-linear maps of $(0,1]$, also with breaks in $\zz[\tau]$ and slopes powers of $\tau$. See \cite{cfp} for details of this construction for $V$, which is analogous to this one. See Figure \ref{telt} for elements in \ttt\ and \vt.

\begin{figure}
  \centerline{\includegraphics[width=120mm]{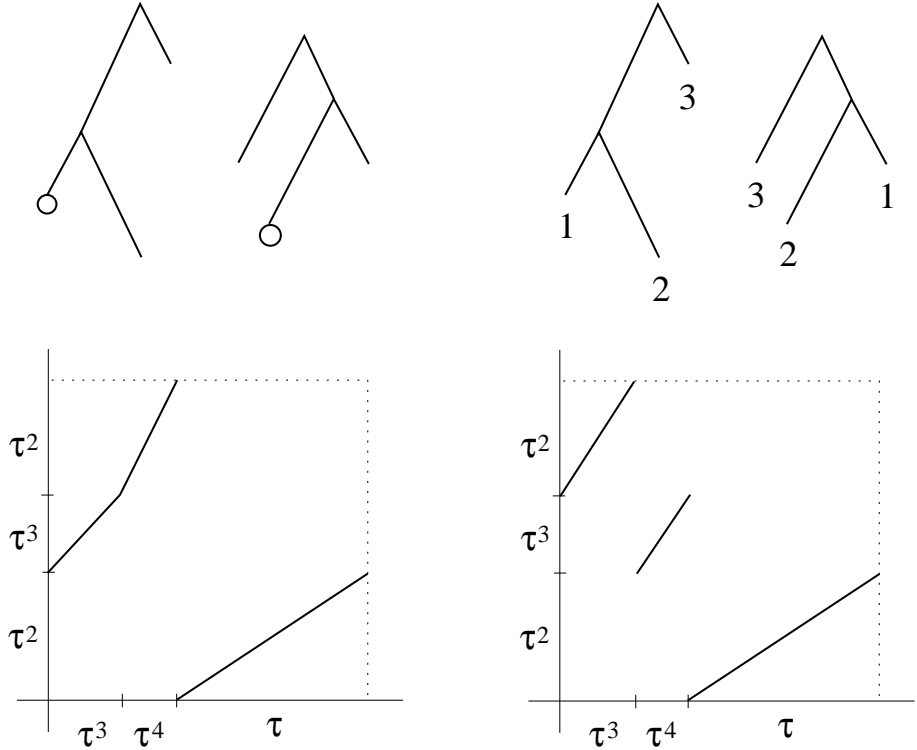}}
  \caption{Elements in \ttt\ and \vt, based on the same subdivisions. In the element in \ttt\ the circles mark that the first leaf is mapped to the second leaf (and the rest in cyclic order), while in the element in \vt\ the numbers indicate the permutation and the way leaves are mapped.}\label{telt}
\end{figure}

Thompson's group $V$ is also commonly seen as homeomorphisms of the Cantor set, but this interpretation is not possible here. The fact that we have two types of carets and we have subdivisions which can be obtained in more than one way makes this interpretation not very clear. The boundary is not well defined the traditional way, because of the different tree configurations that give the same subdivisions. Hence we will stick with left-continuous maps, but we will favour all throughout the paper the expression of the elements in tree-permutation form, as is explained in what follows.

It was shown in \cite {clearyirr} that all elements of $\ft$ can be obtained by pairs of iterated subdivisions of the unit interval as above.
By subdividing an interval of length $\tau^n$ into two intervals of lengths $\tau^{n+1}$ and $\tau^{n+1},$ we have two options, either put the short interval, or the long interval first. When representing iterated subdivisions of the interval by a tree, we can use two different types of carets: the $x$-carets have a long leg on the left, representing the short first interval, and the $y$-carets have their left leg short, representing the long first interval. See \cite{ftau} for details for the representation of elements of $\ft$ by tree-pair diagrams.

We now represent the elements of $\ttt$ and $\vt$ by the same tree-pair diagrams but with an added permutation of the leaves of the trees, see \cite{cfp} for the analogue description for $T$ and $V$.
For the group \ttt\ these permutations preserve the cyclic order of the leaves, hence giving maps of the circle. For the group \vt\ any permutation is allowed in the leaves. This is represented by labels on the leaves indicating the permutation, although the cyclic permutations in \ttt\ are determined by the image of the first leaf, so we may abbreviate the permutation with just a circle in the image of the first leaf.

The diagrams are usually denoted by $(T_1,\pi,T_2)$, where $T_1$ and $T_2$ are trees and $\pi$ is the permutation of the leaves.

Results from \cite{ftau} can be used the same way to compose elements. The target tree of the first element has to be matched with the source tree of the second element, as is standard in the Thompson family. To do this, one may need to change the type of some carets, but this is independent of the two permutations of the elements, so it can be done exactly as in \ft, using Proposition 2.1 and Lemma 2.2 in \cite{ftau}. The only difference is that if a caret is added, the counterpart caret added in the other tree has to go into the corresponding leaf according to the permutation.

A reader familiar with \ft\ and with Thompson's groups $T$ and $V$ will have no problem with all these definitions, as they are very similar to them.

Finally, we can use the methods developed in \cite{ftau} to construct a particularly appropriate diagram for elements of the groups \ttt\ and \vt. As \ttt\ is a subgroup of \vt, the lemma is stated for elements of \vt.

\begin{lem}\label{el-vt-lem} Let $v \in \vt.$ Then there exists a triple $(T_1, \pi, T_2)$ representing $v$, where
\begin{enumerate}
\item $T_1$ and $T_2$ are trees with the same number of leaves, $k$ say.
\item $\pi \in \mathcal S_k.$
\item $T_2$ is a tree consisting entirely of $x$-carets.
\item  The only $y$-carets in $T_1$ have no left children.
\end{enumerate}
\end{lem}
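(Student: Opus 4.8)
The plan is to reduce the statement to the caret manipulations already available for \ft, carrying the permutation along as passive bookkeeping. Since $\ttt\le\vt$, it suffices to treat $v\in\vt$, given by some triple $(S_1,\sigma,S_2)$. The essential tool is the fundamental relation underlying \ft\ (Proposition 2.1 and Lemma 2.2 of \cite{ftau}): over an interval of length $\tau^n$, an $x$-caret whose right child is an $x$-caret cuts out the same three subintervals, namely $\tau^{n+2},\tau^{n+3},\tau^{n+2}$ from left to right, as a $y$-caret whose left child is a $y$-caret. I would first record the two features of the caret calculus that make the permutation irrelevant to the hard part. Applying this relation only reshapes a tree while fixing its subdivision, so it changes neither the underlying map nor $\sigma$; and an expansion subdivides corresponding leaves of $S_1$ and $S_2$, the correspondence being through $\sigma$, by carets of the \emph{same} type, since the linear pieces have slopes that are powers of $\tau$. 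Thus $\sigma$ is only ever relabelled under expansions, and is never touched by reshaping.

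With this in hand I would establish condition (3) first. Invoking the cofinality of the all-$x$ subdivisions among all $\tau$-subdivisions (again by the methods of \cite{ftau}), refine the range subdivision of $v$ to an all-$x$ subdivision; pulling this refinement back through $v$ refines $S_1$ to some tree $S_1'$ and relabels $\sigma$ to some $\sigma'$. This already produces a representative in which $T_2$ consists entirely of $x$-carets, at the cost of leaving $S_1'$ in arbitrary shape.

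It then remains to arrange condition (4) without disturbing (3). The only expansions I am now allowed are those adding $x$-carets, since an added caret is mirrored by a caret of the same type in $T_2$, and $x$-carets keep $T_2$ pure. So the moves available on $S_1'$ are reshaping by the fundamental relation, which is free and invisible to $\sigma'$ and to $T_2$, together with $x$-expansions. I would prove, by strong induction, that any tree can be brought into the form where every $y$-caret has a leaf as left child using only these moves. Inspecting the root: if it is an $x$-caret, or a $y$-caret with a leaf left child, recurse into the relevant subtree(s); if it is a $y$-caret whose left child is a $y$-caret, apply the relation to turn the top two carets into $x$-carets and recurse; the one remaining case is a $y$-caret whose left child is an $x$-caret. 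There the subdivision is missing the breakpoint at $\tau^{n+2}$ that an $x$-root would require, so I would manufacture that breakpoint by a finite sequence of $x$-expansions and then reshape the block, making the root an $x$-caret sitting over a $y$-caret (whose left child is the original left grandchild) and an $x$-caret, and recurse on the two smaller subtrees.

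The main obstacle is exactly this last case together with its termination. Because expansions are forced to add carets of the same type to both trees, conditions (3) and (4) are genuinely coupled: one cannot normalise $T_1$ with unrestricted carets after fixing $T_2$, so a $y$-caret with an $x$-caret left child must be resolved using $x$-expansions alone. Making this rigorous requires two points. First, the cofinality of the $x$-refinements, which guarantees that the breakpoint at $\tau^{n+2}$ can indeed be produced by short-first cuts even though it is not reachable in a single step. Second, a termination argument that does \emph{not} count total carets, since the expansions increase that count; I would instead induct on the number of leaves of the original tree, verifying that each recursive call lands on subtrees carrying strictly fewer of those original leaves, so that the freshly added carets cannot obstruct termination. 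By the first paragraph the permutation plays no role in any of this beyond the obvious relabelling, which is precisely the point emphasised before the statement.
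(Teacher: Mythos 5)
Your handling of the permutation coincides exactly with what the paper does: its entire proof consists of citing Lemma 6.1 of \cite{ftau} and remarking that carets added simultaneously to the two trees are routed through the permutation, which changes nothing. Your opening paragraph (carets added under expansion must be of the \emph{same} type because the slopes are powers of $\tau$; basic moves fix the subdivision and hence never touch $\sigma$) is a correct and fuller version of that remark, and your reconstruction of the underlying \ft-level surgery is right in its key combinatorial step: in the problematic configuration $y(x(a,b),c)$ the missing breakpoint is the one at the $x$-position of the root interval, and once it is created the tree reshapes to an $x$-caret over $y(a,\cdot)$ and an $x$-caret, with the $y$-caret carrying the original left grandchild. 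However, two of the justifications you lean on have genuine gaps.

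First, ``cofinality of the $x$-refinements'' is false as a general principle. A basic move exchanges two $x$-carets for two $y$-carets, so it changes the number of $y$-carets by $\pm 2$, while an $x$-expansion changes it by $0$; granting the fact from \cite{ftau} (the one this paper itself invokes for composition) that trees realising the same subdivision differ by basic moves, the parity of the number of $y$-carets is an invariant of every subdivision reachable by your stage-two moves. Hence from a tree with an odd number of $y$-carets you can never reach an all-$x$ subdivision, and $x$-refinements are \emph{not} cofinal. What you actually need is weaker and true: the single breakpoint at the root's $x$-position can always be created, because as one descends the tree that point alternates between the $x$-position and the $y$-position of successively deeper nodes (the $x$-position of an interval is the $y$-position of the left child of a $y$-caret, and the $y$-position is the $x$-position of the right child of an $x$-caret), so the descent ends at a leaf where one $x$-expansion (for an $x$-position) or two (for a $y$-position, since $x(u,x(v,w))$ has breakpoints $\tau^2$ and $\tau^2+\tau^3=\tau$) suffice. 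This descent must be proved directly; the blanket principle you cite cannot supply it. Second, your termination measure does not visibly decrease: in the hard case applied to $y(x(A_1,A_2),B)$, the recursive call lands on $y(A_1,P)$ where $P$ is the left output of normalising $A_2$, and its count of original leaves is smaller than the caller's only if $B$ or the complementary piece of $A_2$ retains an original leaf; once earlier steps have produced subtrees consisting entirely of fresh material (which your recursion does produce, e.g.\ new leaves appearing as the right-hand siblings), strict decrease is no longer automatic and would require an invariant about where new material sits, which you do not establish. Ironically, the measure you rejected works: the \emph{current} number of leaves strictly decreases into every recursive call, despite the expansions, once one proves two auxiliary bounds (the left piece returned by normalising a subtree $C$ has at most as many leaves as $C$, and the number of expansions performed inside $C$ is at most the size of that left piece). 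With these two repairs your argument goes through and recovers, in self-contained form, what the paper imports from \cite{ftau}.
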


\begin{proof} The proof is entirely analogous to that of \cite[Lemma 6.1]{ftau}, with the exception of including a pair of spines in the middle, in which the leaves are permuted. When adding carets to $T_1$ and $T_2$ simultaneously, one needs to take account of the permutation of leaves, which does not change the outcome of the procedure.
\end{proof}

\begin{cor}\label{SNF-cor}
Every element in $v \in \vt$ has an expression
$$v = x_0^{a_0}y_0^{\eps_0}....x_n^{a_n}y_n^{\eps_n}\pi x_m^{-b_m}....x_0^{-b_0},$$
where $a_i,b_j,n,m \in \zz_{\geq 0}$ ($i=0,...,n$; $j=0,...m$), $\pi \in \mathcal S_k$ for some $k,$ and $\eps_i \in \{0,1\}.$
\end{cor}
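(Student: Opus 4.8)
The plan is to read the expression directly off the triple $(T_1,\pi,T_2)$ supplied by Lemma~\ref{el-vt-lem}, using the correspondence between trees and positive words in the $x_i$ and $y_i$ that is set up for $\ft$ in \cite{ftau}. Recall from there that a tree assembled from $x$- and $y$-carets is encoded by a positive word: indexing the nodes of the right spine by their depth $i$, the carets forming the left-hanging chain at depth $i$ are recorded in order, an $x$-caret as $x_i$ and a $y$-caret as $y_i$. Fixing the right comb of $x$-carets on $k$ leaves as a reference tree $I_k$, I would factor
\[
v=(T_1,\pi,T_2)=(T_1,\mathrm{id},I_k)\,(I_k,\pi,I_k)\,(I_k,\mathrm{id},T_2),
\]
which is legitimate by the composition rule for triples recalled before Lemma~\ref{el-vt-lem}, since the interior trees agree and only one of the three permutations is nontrivial.

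It then remains to translate the two outer factors. Because $T_2$ consists entirely of $x$-carets (item (3)), its positive word uses only the $x_i$, say $x_0^{b_0}\cdots x_m^{b_m}$, so the factor $(I_k,\mathrm{id},T_2)$ contributes the inverse tail $x_m^{-b_m}\cdots x_0^{-b_0}$. For the factor $(T_1,\mathrm{id},I_k)$ I read the left-hanging chain at each depth $i$. Here item (4) is the crucial input: a $y$-caret has no left child, so within a left-hanging chain it can only be the bottom element. Hence at most one $y$-caret occurs at each depth, it follows all the $x$-carets there, and the chain at depth $i$ is recorded as $x_i^{a_i}y_i^{\eps_i}$ with $\eps_i\in\{0,1\}$. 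Ranging over depths $0,\dots,n$ gives the positive word $x_0^{a_0}y_0^{\eps_0}\cdots x_n^{a_n}y_n^{\eps_n}$, and reassembling the three factors yields the claimed expression with $\pi\in\mathcal S_k$ sandwiched in the middle.

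The argument is essentially bookkeeping: the real work has already been done by Lemma~\ref{el-vt-lem} in normalising the two trees, and without the permutation this is exactly the analogous statement for $\ft$ in \cite{ftau}. The only genuinely new point, and the one requiring a little care, is that the leaf permutation can be isolated into the single central factor $(I_k,\pi,I_k)$; this is what the factorization above achieves, and it is the place where one must keep the composition conventions of \cite{ftau} consistent. I do not anticipate any obstacle beyond this.
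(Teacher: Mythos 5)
Your proposal is correct and is essentially the paper's own (implicit) argument: the corollary is stated without proof as an immediate consequence of Lemma~\ref{el-vt-lem}, obtained exactly as you do it, by isolating the permutation in a central factor and translating the two normalized trees into a positive word in the $x_i, y_i$ and an $x$-only word via the tree--word dictionary of \cite{ftau}. One small caveat: your description of that dictionary (left-hanging chains indexed by the depth of right-spine nodes) is only literally accurate for trees whose left-hanging subtrees are chains; for a general tree one must use the leaf-exponent/seminormal-form correspondence from \cite{ftau}, which is what condition (4) of the lemma feeds into to force the $y$-carets to sit at the bottoms of left paths and hence yield $\eps_i\in\{0,1\}$.
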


This expression is called, in short, the $p\pi q^{-1}$ form of an element. The only particularity satisfied by elements of \ttt\ is that the permutation obtained will be a cyclic permutation $c$ of the leaves, and in this case, we call it the $pcq^{-1}$ form, to keep with the tradition. The $p\pi q^{-1}$ form can be used to directly solve the word problem for the element by using the following lemma.

\begin{lem}\label{vt-identity-lem} Suppose the identity element is given by an expression $p\pi q^{-1},$ where $p$ and $q$ are positive words in the $x_i,y_i$, $i \geq 0,$ and $\pi \in \mathcal S_n$ for some $n$. Then $\pi = id_{\mathcal S_n}.$
\end{lem}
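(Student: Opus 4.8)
The plan is to argue geometrically, exploiting that $\vt$ is \emph{by definition} a group of genuine homeomorphisms of $(0,1]$: the hypothesis that $p\pi q^{-1}$ equals the identity element means precisely that the piecewise-linear map $g$ it represents is the identity on $(0,1]$. The one feature of $\vt$ that must be handled with care is that, because of the relation $1=\tau+\tau^2$, a single partition of $(0,1]$ can be produced by several different trees. I would sidestep this by never arguing about the trees themselves, but only about the partitions of the interval into leaf-intervals that they induce.

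First I would observe that, since $\pi\in\mathcal S_n$ permutes $n$ symbols, the positive words $p$ and $q$ must involve the same number of carets and hence determine subdivisions $T_1$ and $T_2$ of $(0,1]$ with the same number $n$ of leaves. Reading leaves from left to right, $T_1$ cuts $(0,1]$ into intervals $I_1<I_2<\dots<I_n$ and $T_2$ into intervals $J_1<J_2<\dots<J_n$, the ordering being the natural left-to-right order of disjoint subintervals. By the way a tree-pair-permutation diagram acts, the map $g$ carries each $I_i$ by an increasing affine map, with slope a power of $\tau$, onto $J_{\pi(i)}$ (the argument is insensitive to the precise domain/range convention).

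Next I would feed in $g=\mathrm{id}$. On each piece $I_i$ the map is an increasing affine bijection onto $J_{\pi(i)}$; if it is the identity, that affine map is the identity, which forces $J_{\pi(i)}=I_i$ for every $i$. Hence the two partitions coincide as \emph{sets} of intervals, $\{J_1,\dots,J_n\}=\{I_1,\dots,I_n\}$. Since a finite partition of $(0,1]$ into intervals admits a unique increasing enumeration, this equality of sets forces $J_r=I_r$ for every $r$. Combining $J_{\pi(i)}=I_i$ with $J_i=I_i$, and using that distinct leaves give distinct (disjoint) intervals, yields $\pi(i)=i$ for all $i$, that is, $\pi=\mathrm{id}_{\mathcal S_n}$.

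The step I would treat as the main obstacle is exactly the passage from trees to partitions: one cannot conclude $T_1=T_2$, since the identity map may genuinely be represented by two different trees. What survives, and is all that is needed, is the weaker statement that $T_1$ and $T_2$ induce the \emph{same} partition of $(0,1]$, together with the observation that the leaf-labelling is pinned down by the left-to-right order of that partition, independently of which carets were used to build it. Once this is phrased at the level of the partition rather than the tree, the tree-ambiguity of $\vt$ disappears and the permutation is forced to be trivial.
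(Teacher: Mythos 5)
Your proof is correct and takes essentially the same approach as the paper's: both interpret $p\pi q^{-1}$ as the piecewise-linear map it represents and use the fact that the identity must carry each leaf-interval of the domain subdivision onto itself to force $\pi=\mathrm{id}$. The only difference is in execution --- the paper runs a left-to-right induction matching breakpoints one at a time (using $\mathrm{id}(0)=0$ and slope $1$), while you replace that induction with the direct observation that $J_{\pi(i)}=I_i$ forces the two partitions to coincide as sets, whence the unique increasing enumeration pins down $\pi$ --- a slightly cleaner rendering of the same geometric idea.
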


\begin{proof} We represent $id$ by a reduced tree-pair diagram $(T_1,T_2),$ where the leaves of $T_1$ are labelled in order, and the labelling of the leaves of $T_2$ is given by $\pi.$ We show that the leaves of $T_2$ are also labelled in order; hence $\pi =1.$

Since $id(0)= 0, $ this implies that leaf $1$ of $T_1$ is mapped to the left-most leaf of $T_2$. Furthermore, the slope of $id$ is $1$, and hence the depths of leaves $1$ in $T_1$ and $T_2$ are the same, and hence the next leaves in each
tree represent the same break-point $x \in \zz[\tau] \cap [0,1].$ Now $id(x) =x,$ and hence the second leaf in $T_2$ is also labelled by $2$, and, since the slope is $1$, also of the same level as leaf $2$ in $T_1.$ We continue this argument to see that $\pi =1.$
\end{proof}

From this lemma, it is straightforward to solve the word problem for the groups \ttt\ and \vt: given an element, find its diagram and its $p\pi q^{-1}$ form. If $\pi$ is not the identity, the element is not the identity. And if the permutation is the identity, then the element is actually in \ft, and that word problem was solved in \cite{ftau}.

Finally, just observe that the method we will follow to obtain presentations for \ttt\ and \vt\ is to see that the $p\pi q^{-1}$ form can be obtained algebraically with the set of generators stated. Hence, we proceed with the details for both groups.

\section{A presentation for \ttt}

\subsection{Generators for \ttt}

As happens with the standard dyadic groups, the only thing one needs to do to obtain generating sets for $T$ is to add cycles. Recall that in  \ft\ we gave preference to $x$-type carets, where the left leg was long and the right one was short. Generators were constructed with spines made out of $x$-carets.  The $y_n$ generators had one $y$-type caret in the only nonspine position. So it only makes sense here to consider cyclic permutations of a spine as generators, keeping the methods of \cite{cfp}. Hence, the generators $c_n$, for $n\ge 1$, will be defined as an element whose two trees are spines with $n+1$ carets, and where the first leaf is mapped to the last leaf. This is an element of order $n+2$ (or one of its divisors), since it is a cycle on its $n+2$ leaves. See Figure \ref{cgens} for the diagrams of the $c$-generators.

\begin{figure}
  \centerline{\includegraphics[width=60mm]{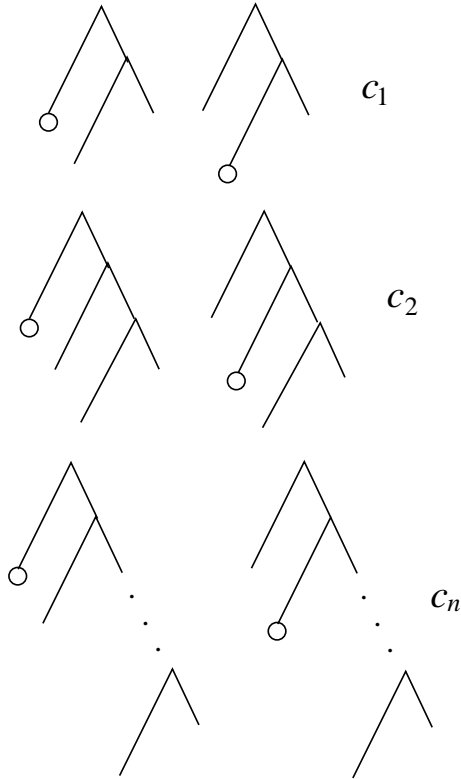}}
  \caption{The $c$ generators in \ttt.}\label{cgens}
\end{figure}

From here we can already find a set of generators.

\begin{prop}
%A generating set for the group \ttt\ is constructed with the elements $x_n$, $y_n$ and $c_n$, for all $n\ge 1$.
The set $\{x_n,y_n,c_n\mid n\ge 1\}$ is a generating set for the group \ttt.
\end{prop}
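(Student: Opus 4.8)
The plan is to use the normal form already in hand to peel off the part of an element that genuinely requires the new generators. By Corollary~\ref{SNF-cor}, every $v \in \ttt$ can be written as $v = p\,c\,q^{-1}$, where $p$ and $q$ are positive words in the $x_i,y_i$ and $c$ is the cyclic permutation of the $k$ leaves of the common subdivision. The factors $p$ and $q^{-1}$ lie in the subgroup $\ft \leq \ttt$, whose generation by the $x_n,y_n$ is established in \cite{ftau}; so the entire problem reduces to showing that the middle factor $c$ lies in $\langle x_n, y_n, c_n\rangle$. This is precisely the point at which $\ttt$ differs from $\ft$, and it is where the generators $c_n$ must be used.

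Next I would pin down the tree-pair diagram of $c$. Both positive factors can be taken to have the $x$-spine $R_k$ on $k$ leaves as their common base tree — this is the spine furnished by Lemma~\ref{el-vt-lem}, whose target tree consists entirely of $x$-carets — so the product $p\,c\,q^{-1}=(T_1,\pi,T_2)$ forces $c$ to be the \emph{pure-permutation} diagram $(R_k,\pi,R_k)$, with the same $x$-spine on both sides and $\pi$ the cyclic permutation. Since $\pi$ preserves the cyclic order of the leaves it is a rotation, hence a power $r^j$ of the one-step rotation $r$ sending the first leaf to the last. Because the two trees of $c$ agree, composition of such diagrams merely composes the permutations, so $(R_k,r^j,R_k)=(R_k,r,R_k)^j$. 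Finally $(R_k,r,R_k)$ is by definition the generator $c_{k-2}$: it has $k-1$ carets, $k$ leaves, and sends the first leaf to the last. Thus whenever $k\geq 3$ the factor $c$ is a power of $c_{k-2}\in\{c_n:n\geq 1\}$, and $v\in\langle x_n,y_n,c_n\rangle$.

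The main obstacle is the degenerate case $k=2$, which the indexing $n\geq 1$ does not reach directly: here $c$ is the order-two rotation $(R_2,\text{swap},R_2)$, nominally ``$c_0$'', and simply refining the subdivision does not help, since adding a caret together with its image caret destroys the common-tree structure (the two trees cease to be equal). I expect this to be the fiddly point, and I would resolve it exactly as for Thompson's group $T$: exhibit the half-rotation explicitly as a word in $c_1$ together with $x_1,y_1$ — in $T$ the order-two rotation is built from the order-three rotation and the generators of $F$ — so that $c_0\in\langle x_n,y_n,c_n:n\geq 1\rangle$. Once this single small case is dispatched, the reduction of the first two paragraphs shows that $\{x_n,y_n,c_n\mid n\geq 1\}$ generates every element of $\ttt$.
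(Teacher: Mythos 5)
Your first two paragraphs are exactly the paper's proof: insert two copies of the $x$-spine $R_k$ to split the diagram into a positive element of \ft, the pure cycle $(R_k,\pi,R_k)$, and a negative element of \ft, then identify the middle factor as a power of $c_{k-2}$. Where you diverge is the degenerate case $k=2$, and you are right that it needs an argument (the paper glosses over it: the two-leaf swap is genuinely not a power of any $c_n$ with $n\ge 1$, since any $c_n^m$ sending $0$ to $\tau^2$ must have $m=1$, and then it sends $\tau^2$ to $\tau^2+\tau^3\neq 0$). But your diagnosis of that case is wrong. You claim refining does not help because adding a caret and its image caret ``destroys the common-tree structure''; however, nothing in the argument ever requires the middle element to retain a pure-permutation form --- the expansion should be applied to the whole diagram \emph{before} the spines are inserted, and the three-factor split works for any diagram with at least three leaves, whatever its two trees are. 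Concretely: expand $(R_2,\mathrm{swap},R_2)$ by adding an $x$-caret to one leaf and an $x$-caret to its image leaf; this yields a three-leaf diagram whose permutation is again cyclic, and the same decomposition writes it as $p\,c_1^{\pm 1}\,q^{-1}$ with $p,q\in\ft$. So the ``fiddly point'' dissolves into the general case after one expansion; the explicit word in $c_1$ and the $\ft$-generators that you promise but never produce does exist (it is exactly this $p\,c_1^{\pm 1}\,q^{-1}$), so your proof is completable --- but not by the route you describe, and as written it is left unfinished at precisely the one point where it departs from the general argument.
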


{\it Proof.} To see this one only needs to decompose a tree diagram in three: a diagram for a positive element from \ft, a cycle (i.e., a power of a $c_n$) and a negative element of \ft. This can be done easily by introducing two spines in the middle of the diagram: the positive piece is given by the first tree and a spine (with the same number of carets), then a cycle with this same spine, which concentrates the permutation part of the element, and finally the spine with the target tree. Since the \ft\ elements can be generated by the $x_n$ and $y_n$, and the central cycle is a power of a $c_n$, we have a generating set.\qed

We need to evaluate the interactions of the generators with each other to find the relations. Clearly all relations from \ft\ are satisfied here the same way, so we need to see how the $x_n$ and $y_n$ generators interact with the $c_n$. Observe, to start with, that the generators $x_n$ and $c_n$ by themselves generate a copy of $T$ inside \ttt, because all elements have only $x$-type carets involved and the combinatorics are exactly the same as those in $T$. Hence, the relations on $T$ between $x_n$ and $c_n$ are satisfied the same way. So we already have the following relations:

\begin{enumerate}
\item[(1)] The relators from \ft:
\begin{enumerate}
\item[(1.1)] $x_jx_i=x_ix_{j+1}$, if $i<j$.
\item[(1.2)] $x_jy_i=y_ix_{j+1}$, if $i<j$.
\item[(1.3)] $y_jx_i=x_iy_{j+1}$, if $i<j$.
\item[(1.4)] $y_jy_i=y_iy_{j+1}$, if $i<j$.
\item[(1.5)] $y_n^2=x_nx_{n+1}$.
\end{enumerate}
\item[(2)] The relations involving the generators $c_n$ and $x_n$,  as in $T$:
\begin{enumerate}
\item[(2.1)] $x_kc_{n+1}=c_{n}x_{k+1}$, if $k<n$.
\item[(2.2)] $c_nx_0=c_{n+1}^2$.
\item[(2.3)] $c_n=x_nc_{n+1}$.
\item[(2.4)] $c_n^{n+2}=1$.
\end{enumerate}
\end{enumerate}

We only need to find analogs for the relations (2.1), (2.2) and (2.3) between the $y_n$ and $c_n$ by checking how these generators interact. It is straightforward to check that the relations $y_kc_{n+1}=c_ny_{k+1}$ are satisfied for all $k<n$, because the $y$-caret does not affect the cycle, they work in the exact same way as the $x$-carets. To find an analog of (2.2) for the $y_n$, we observe that the product $c_ny_0$ acquires a $y$-caret in the last leaf, continuing the spine, and this must be eliminated to rewrite it in the generators. After adding a caret and performing a basic move, we obtain the product $y_{n+1}^{-1}c_{n+1}^2$. Finally, since normal forms for the elements of \ft\ do not have any appearances of $y_n^{-1}$, a relator analog to (2.3) will not be needed.

So, to those families of generators, we can add the following two:

\begin{enumerate}
\item[(3.1)] $y_kc_{n+1}=c_{n}y_{k+1}$, if $k<n$.
\item[(3.2)] $c_ny_0=y_{n+1}^{-1}c_{n+1}^2$.
\end{enumerate}

It is easy to check by direct inspection that these relations are all true in \ttt. The goal of the following section will be to prove that this gives a presentation for the group.

\subsection{Relators for \ttt}

The goal of this section is to exhibit a presentation of \ttt. The theorem we will prove is the following.

\begin{thm}\label{prez} A presentation for the group \ttt\ is given by the generators
$x_n$, $y_n$ and $c_n$, for $n\ge 1$ with the following relators:
\begin{enumerate}
\item[(1)] The relators from \ft:
\begin{enumerate}
\item[(1.1)] $x_jx_i=x_ix_{j+1}$, if $i<j$.
\item[(1.2)] $x_jy_i=y_ix_{j+1}$, if $i<j$.
\item[(1.3)] $y_jx_i=x_iy_{j+1}$, if $i<j$.
\item[(1.4)] $y_jy_i=y_iy_{j+1}$, if $i<j$.
\item[(1.5)] $y_n^2=x_nx_{n+1}$.
\end{enumerate}
\item[(2)] The relations involving the generators $c_n$ and $x_n$, same as in $T$:
\begin{enumerate}
\item[(2.1)] $x_kc_{n+1}=c_{n}x_{k+1}$, if $k<n$.
\item[(2.2)] $c_nx_0=c_{n+1}^2$.
\item[(2.3)] $c_n=x_nc_{n+1}$.
\item[(2.4)] The finite order for the $c_n$, namely, $c_n^{n+2}=1$.
\end{enumerate}
\item[(3)] The relations involving the generators $c_n$ and $y_n$:
\begin{enumerate}
\item[(3.1)] $y_kc_{n+1}=c_{n}y_{k+1}$, if $k<n$.
\item[(3.2)] $c_ny_0=y_{n+1}^{-1}c_{n+1}^2$.
\end{enumerate}
\end{enumerate}
\end{thm}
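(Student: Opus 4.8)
The plan is to follow the standard template for verifying a group presentation. Write $\Gamma$ for the group defined abstractly by the generators and relators in the statement, and recall that all of these relators have already been checked to hold in \ttt. Consequently there is a homomorphism $\psi\colon \Gamma \to \ttt$ carrying each abstract generator to the corresponding element of \ttt; since the generators do in fact generate \ttt, the map $\psi$ is onto. The whole content of the theorem is therefore that $\psi$ is injective, and my strategy is to produce a normal form inside $\Gamma$ --- the abstract analogue of the $pcq^{-1}$ form of Corollary~\ref{SNF-cor} --- and then to read off injectivity from Lemma~\ref{vt-identity-lem}.

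First I would show that every element of $\Gamma$ can be written, using only the relators, in the form $p\,c\,q^{-1}$, where $p$ and $q$ are positive words in the $x_i$ and $y_i$ and $c$ is a power of a single $c_n$. Starting from an arbitrary word in the generators and their inverses, one pushes every occurrence of a $c$-generator toward the middle: relators~(2.1) and~(3.1) move a $c$ leftward past an $x$ or a $y$ of smaller index, while relators~(2.2), (2.3) and~(3.2) handle the collisions with the bottom generators $x_0$ and $y_0$ and raise the indices, so that all the $c$-generators can be brought to a common level and multiplied into a single $c_n^k$ by~(2.3). The positive pieces that remain are then collected on the left and the negative pieces on the right, and relators~(1.1)--(1.5) put each of $p$ and $q$ into the positive normal form for \ft. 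The one genuinely new phenomenon, compared with the classical group $T$, is relator~(3.2): moving a cycle past a $y_0$ produces a factor $y_{n+1}^{-1}$, and one must check that this inverse generator can be shepherded into the right-hand negative word $q^{-1}$ without disturbing the portions already normalised.

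With the normal form in hand the conclusion is quick. Suppose $w$ is a word with $\psi(w)=1$ in \ttt, and rewrite $w = p\,c\,q^{-1}$ in $\Gamma$ as above. Because this word represents the identity of \ttt, Lemma~\ref{vt-identity-lem} (applied inside $\vt \supseteq \ttt$) forces the associated permutation to be trivial; that is, the central power $c = c_n^k$ acts as the identity on the spine. As $c_n$ is a single cycle on its $n+2$ leaves, this means $k$ is a multiple of $n+2$, so relator~(2.4) gives $c = 1$ in $\Gamma$. We are left with $pq^{-1}$ representing the identity of \ttt\ inside the copy of \ft\ generated by the $x_i$ and $y_i$; since relators~(1.1)--(1.5) are the defining relators of \ft\ established in \cite{ftau}, we conclude $pq^{-1}=1$ in $\Gamma$, whence $w=1$ in $\Gamma$ and $\psi$ is injective.

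The main obstacle is the normal-form step. Verifying that the relators genuinely suffice to collect every cyclic generator into one central power --- and in particular that the inverse $y$-generators created by~(3.2) always land cleanly in the negative word --- is the delicate part; once a terminating rewriting scheme is set up, the interactions are governed entirely by relators~(2.1)--(2.4) and~(3.1)--(3.2), and the argument runs parallel to the treatment of $T$ in \cite{cfp}, with the $y$-carets handled exactly as the $x$-carets except for the bookkeeping forced by~(3.2).
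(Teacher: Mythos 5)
Your overall frame coincides with the paper's: pass to the abstract group $\Gamma$ defined by these generators and relators, show that every word can be rewritten into $pcq^{-1}$ form using only the relators, then apply Lemma~\ref{vt-identity-lem} to kill the cyclic part and invoke the presentation of \ft\ from \cite{ftau} (relators (1.1)--(1.5)) for what remains. Your endgame is in fact slightly more careful than the paper's, since you spell out why triviality of the permutation forces $c_n^k=1$ \emph{in $\Gamma$} (the exponent is a multiple of $n+2$, then relator (2.4) applies).

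The genuine gap is that the normal-form step --- which is the entire content of the theorem, and to which the paper devotes essentially all of its proof (Proposition~\ref{2.3} and Lemma~\ref{2.4}) --- is asserted rather than proved, and the one concrete route you do indicate would fail. Relator (3.2) gives $c_n^m y_0 = c_n^{m-1}y_{n+1}^{-1}c_{n+1}^2$, and you propose to shepherd the factor $y_{n+1}^{-1}$ \emph{rightward} into $q^{-1}$. But the relation derived from (3.1), namely $y_k^{-1}c_n = c_{n+1}y_{k+1}^{-1}$, requires $k<n$, whereas here the index of the $y^{-1}$ equals that of the $c$-power; and raising the index of the $c$-power via Lemma~\ref{2.4}(i) inserts an $x$-generator in between, past which the $y^{-1}$ moves only at the cost of raising its own index by the same amount ($y_{n+1}^{-1}c_{n+1}^2 = x_n y_{n+2}^{-1}c_{n+2}^2$, and so on), so this rewriting stalls indefinitely. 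The paper goes the opposite way: it moves $y_{n+1}^{-1}$ to the \emph{left} past $c_n^{m-1}$, and the device that makes this possible is to first raise the $c$-index to a large $N$, then use the finite order (2.4) to replace the whole expression by an inverse, $c_N^{m-1}y_{n+1}^{-1} = [\,y_{n+1}c_N^{N-m+3}\,]^{-1}$, and apply (3.1) repeatedly \emph{inside} the bracket, where the index inequality is satisfied. Consequently all inverse $y$-generators accumulate on the positive side of $c$, where the \ft-relations eliminate them (no $y^{-1}$ occurs in an \ft\ normal form, by (1.5)); only negative $x$-generators ever need to cross $c$ to the right, and that is pure $T$-combinatorics. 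Without this inversion trick, or some worked-out alternative, the claim that the relators suffice to reach $pcq^{-1}$ form remains an assertion, and the proof is incomplete precisely at its load-bearing point.
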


The main tool to prove this theorem is the method to write any element in $pcq^{-1}$ form. This is the algebraic analog of the splitting of a diagram in three pieces used to show that these generators suffice, explained at the beginning of the previous section.

\begin{prop}\label{2.3} Every element of $T_\tau$ can be written in $pcq^{-1}$ form, where $p$ and $q$ are positive elements of \ft\ and $c$ is short for $c_n^m$ for some $m,n$.
\end{prop}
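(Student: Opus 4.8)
The plan is to produce the $pcq^{-1}$ form by reducing the leaf permutation to a power of a single generator $c_n$. By Corollary~\ref{SNF-cor}, applied to \ttt\ as a subgroup of \vt, any $v$ has an expression $p\pi q^{-1}$ in which $p,q$ are positive words in the $x_i,y_i$ and $\pi$ permutes the $k$ leaves of the common source and target trees $T_1,T_2$. Since \ttt\ consists of orientation-preserving maps of the circle, $\pi$ must preserve the cyclic order of the leaves, and such a permutation of $\{1,\dots,k\}$ is necessarily a rotation, i.e.\ a power $\rho^m$ of the full $k$-cycle. So the whole problem reduces to expressing this rotation, carried on the trees $T_1,T_2$, as a power of one of the standard generators $c_n$, whose two trees are $x$-spines.

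The main step is the algebraic version of the three-piece splitting described before the statement. Let $S$ be the $x$-spine with $k-1$ carets (hence $k$ leaves), so that the triple $(S,\rho^m,S)=c_{k-2}^m$ is literally a power of a standard generator. I would factor $v=(T_1,\mathrm{id},S)\cdot c_{k-2}^m\cdot(S,\mathrm{id},T_2)$, the two outer triples being elements of \ft. Writing each outer factor in its standard positive-times-inverse-positive form in \ft\ gives $v=p_{T_1}\,s^{-1}\,c_{k-2}^m\,s\,p_{T_2}^{-1}$, where $s$ is the positive $x$-word spelling out the spine $S$, while $p_{T_1},p_{T_2}$ are the positive words spelling out $T_1,T_2$ (and may contain $y_i$). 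The key point is that the central block $s^{-1}c_{k-2}^m s$ involves only the generators $x_i$ and $c_i$, hence lies in the copy of Thompson's group $T=\langle x_i,c_i\rangle$ sitting inside \ttt. There the classical argument --- equivalently, repeated use of relations (2.1), (2.2) and (2.3) --- rewrites it as $s_1c_n^{m'}s_2^{-1}$ with $s_1,s_2$ positive $x$-words. Absorbing $s_1$ into $p_{T_1}$ and $s_2$ into $p_{T_2}$ then yields $v=(p_{T_1}s_1)\,c_n^{m'}\,(p_{T_2}s_2)^{-1}$, which is the desired form with $p=p_{T_1}s_1$ and $q=p_{T_2}s_2$ both positive.

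The step I expect to cause the most trouble, and the reason the splitting is set up this way, is keeping the $y$-carets entirely out of the central computation. The one relation that destroys positivity is (3.2), $c_ny_0=y_{n+1}^{-1}c_{n+1}^2$: moving a cycle past a $y$-caret in the first position creates an inverse generator, which would spoil the positivity of $p$. By placing the cycle on an all-$x$ spine, only $x_i$ are ever commuted past it, so (3.2) is never invoked and the $y_i$ stay frozen inside $p_{T_1}$ and $p_{T_2}$. Making this rigorous requires the compatibility of the spine insertion with the rotation: one must add carets simultaneously to $T_1$ and $T_2$, respecting $\pi$, until both triples $(T_1,\mathrm{id},S)$ and $(S,\mathrm{id},T_2)$ make sense. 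This is exactly the permuted caret-addition procedure already used in Lemma~\ref{el-vt-lem} and imported from \cite{ftau}; once this bookkeeping is carried out, the remaining reduction of the central block is the familiar computation in $T$ and introduces no $y^{-1}$.
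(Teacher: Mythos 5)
Your argument is correct as a proof of the literal statement, but it is not the paper's proof, and the difference is not cosmetic. You argue diagrammatically: starting from the tree-pair representation (Corollary \ref{SNF-cor}), you note that for an element of \ttt\ the leaf permutation preserves cyclic order and is therefore a rotation, you insert a pair of $x$-spines, identify the middle triple $(S,\rho^m,S)$ with $c_{k-2}^m$, and absorb the outer pieces into positive words. This is essentially the paper's proof of the \emph{earlier} proposition that $\{x_n,y_n,c_n\}$ generates \ttt\ (``decompose a tree diagram in three''), carried out in more detail. The paper's proof of Proposition \ref{2.3} is deliberately different: it is a word-rewriting algorithm that takes an arbitrary word in the generators $x_n,y_n,c_n$ and converts it into $pcq^{-1}$ form using only the relators listed in Theorem \ref{prez}, the technical core being how to push a power $c_n^m$ past a $y$-generator: apply (3.2), then repair the resulting $y^{-1}$ by raising indices via Lemma \ref{2.4}(i) and applying (3.1) repeatedly, and finally merge consecutive powers of $c$-generators into a single one.

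The reason the paper needs the algebraic version --- and the reason your proof cannot be substituted for it --- is the role the proposition plays in proving Theorem \ref{prez}. There one takes a word $w$ in the generators representing the identity of \ttt\ and must rewrite it in $pcq^{-1}$ form \emph{as a consequence of the stated relators}, i.e.\ as an equality holding in the abstractly presented group; only then do Lemma \ref{vt-identity-lem}, relation (2.4), and the known presentation of \ft\ let one conclude that $w=1$ in the presented group. Your argument produces a $pcq^{-1}$ expression that equals $w$ as an element of the group \ttt, but it gives no derivation of that equality from the relators, so it says nothing about $w$ in the presented group; with your proof in place of the paper's, Proposition \ref{2.3} would merely duplicate the generating-set result and the proof of Theorem \ref{prez} would have a genuine gap. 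Relatedly, your closing point that the splitting is designed so that relation (3.2) ``is never invoked'' is exactly opposite to what the paper's proof must accomplish: its burden is to show that (3.1)--(3.2), together with the other relators, \emph{do} suffice to carry a power of $c$ past $y$-generators wherever they occur in an arbitrary word, not to arrange matters so that this never happens.
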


{\it Proof.} We need the following well-known lemma, which is known to hold in $T$:

\begin{lem}\label{2.4} Let n be a positive integer, The generators $x_n$ and $c_n$ satisfy the following equalities:
\begin{enumerate}
\item[(i)] $c_n^m=x_{n+1-m}c_{n+1}^m$, for $m\le n+1$
\item[(ii)] $c_n^m=c_{n+1}^{m+1}x_{m-1}^{-1}$
\end{enumerate}
\end{lem}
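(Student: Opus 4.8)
The plan is to prove both identities by induction on $m$, using only the $T$-type relations (2.1)--(2.3). Since $x_n$ and $c_n$ generate a copy of $T$ inside \ttt, one could alternatively quote the known identities in $T$ directly, but the inductive argument straight from the relations is short and self-contained, so I would present that.

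For (i), the base case $m=1$ is precisely relation (2.3), $c_n = x_n c_{n+1}$. For the inductive step, suppose $c_n^m = x_{n+1-m} c_{n+1}^m$ with $1 \le m \le n$. Then
\[
c_n^{m+1} = c_n\, c_n^m = c_n\, x_{n+1-m} c_{n+1}^m.
\]
Rewriting (2.1) in the form $c_n x_j = x_{j-1} c_{n+1}$ (valid for $1 \le j \le n$) and applying it with $j = n+1-m$, which lies in the allowed range exactly because $1 \le m \le n$, gives $c_n x_{n+1-m} = x_{n-m} c_{n+1}$, whence $c_n^{m+1} = x_{n-m} c_{n+1}^{m+1}$. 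This is the claim for $m+1$, so the induction covers $m = 1,\dots,n+1$.

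For (ii), the base case $m=1$ is relation (2.2) rearranged, $c_n = c_{n+1}^2 x_0^{-1}$. Assuming $c_n^m = c_{n+1}^{m+1} x_{m-1}^{-1}$, I would write
\[
c_n^{m+1} = c_n^m\, c_n = c_{n+1}^{m+1} x_{m-1}^{-1} c_n,
\]
and the key move is to rearrange (2.1) into the form $x_k^{-1} c_n = c_{n+1} x_{k+1}^{-1}$, valid for $k < n$. Applying this with $k = m-1$ (legitimate as long as $m \le n$) yields $x_{m-1}^{-1} c_n = c_{n+1} x_m^{-1}$, so that $c_n^{m+1} = c_{n+1}^{m+2} x_m^{-1}$, completing the induction.

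The computations are each a one-line manipulation, so the real work is bookkeeping: every application of (2.1) must stay within its hypothesis $k<n$, and this is exactly what forces the range $m \le n+1$ in the statement. I expect the main obstacle to be nothing more than avoiding off-by-one errors in the index shifts when passing between the two rearranged forms of (2.1).
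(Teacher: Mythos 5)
Your proof is correct, but it takes a different route from the paper: the paper gives no derivation at all, simply noting that both identities are contained in Lemma 5.6 of \cite{cfp}, the point being that $x_n$ and $c_n$ generate a copy of $T$ inside $T_\tau$ subject to exactly the relators (2.1)--(2.4), so the classical Cannon--Floyd--Parry identities transfer verbatim. Your inductive derivation from (2.1)--(2.3) is self-contained and, checking the details, sound: both rearranged forms of (2.1) are legitimate, the index bookkeeping ($j=n+1-m\in[1,n]$ in part (i), $k=m-1<n$ in part (ii)) is exactly right, and the base cases are relations (2.3) and (2.2) respectively. Your approach buys two things. First, it exhibits the identities as formal consequences of the relators rather than as facts about the group $T_\tau$; since Lemma \ref{2.4} is used in the rewriting argument for Proposition \ref{2.3}, which in turn underpins the completeness of the presentation in Theorem \ref{prez}, a derivation purely from the relators is precisely what that application requires (the paper's citation achieves the same only because \cite{cfp} derive their Lemma 5.6 from these same relations, but your version makes this transparent). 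Second, your induction pins down the range of validity of (ii), namely $1\le m\le n+1$, which the paper's statement omits; some restriction is genuinely needed, since for $m=n+2$ relation (2.4) gives $c_n^{n+2}=1$ while $c_{n+1}^{n+3}x_{n+1}^{-1}=x_{n+1}^{-1}\neq 1$. What the citation buys, of course, is brevity.
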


This is contained in Lemma 5.6 in \cite{cfp}. \qed

We continue with the proof of our proposition. Given a word in the $x_n,y_n,c_n$ generators, we want to transform it into $pcq^{-1}$ form. First of all, in this word, between two consecutive $c_n$ generators there is a word only in $x_n$ and $y_n$, representing an element in $\ft.$  According to \cite{ftau} we can put this element of \ft\ in seminormal form
$$
x_0^{a_0}y_0^{\epsilon_0}x_1^{a_1}y_1^{\epsilon_1}\dots x_n^{a_n}y_n^{\epsilon_n} x_m^{-b_m}x_{m-1}^{-b_{m-1}}\dots x_1^{-b_1}x_0^{-b_0}
$$
where $a_i,b_i\ge 0$ and $\epsilon_i\in\{0,1\}$.  The proof of Proposition \ref{2.3} will follow from a repeated application of the following process: transform
$$
c_n^m
x_0^{a_0}y_0^{\epsilon_0}x_1^{a_1}y_1^{\epsilon_1}\dots x_n^{a_n}y_n^{\epsilon_n} x_m^{-b_m}x_{m-1}^{-b_{m-1}}\dots x_1^{-b_1}x_0^{-b_0}
c_k^l
$$
into $w(x_n,y_n)c_N^M$, where $w(x_n,y_n)$ is a word in the $x_n$ and $y_n$ only, and $N,M$ are appropriately chosen positive integers.  Applying this process repeatedly from left to right will yield the result.

To move the $c_n^m$ past the $x_0$ use the standard $T$ relations. Observe that the index of a given $c_n^m$ can be raised as much as we need, at the price of adding some instances of $x_n$ to its left (which is fine) by repeated uses of property (i) of Lemma \ref{2.4}. To move an element $c_n^m$ past $y_0$, we apply:
$$
c_n^m y_0=c_n^{m-1} c_n y_0=c_n^{m-1} y_{n+1}^{-1} c_{n+1}^2
$$
after using relation (3.2). To move the $y_{n+1}^{-1}$ to the left past the $c_n^{m-1}$, we will take inverses. If we take inverses directly as $y_{n+1}c_n^{1-m}$, to apply (3.1) we need to have raised the index of $c_n$ beforehand, because on the left hand side of (3.1) the term $y_kc_{n+1}$ requires that the index of $c$ is at least two higher than that of $y$. Since there are several instances of (3.1) to be taken, and each one increases the index of $y$ by 1, we will need to raise the index several times, using (i) from the lemma. Hence:
$$
\begin{array}{ll}
c_n^{m-1} y_{n+1}^{-1}&=x_{n-m+2} c_{n+1}^{m-1} y_{n+1}^{-1}=x_{n-m+2}x_{n-m+3}c_{n+2}^{m-1}y_{n+1}^{-1}\\
&=x_{n-m+2}x_{n-m+3}\ldots x_Nc_N^{m-1}y_{n+1}^{-1}
\end{array}
$$
where $N$ is an index large enough, much larger than $n+1$, to be able to apply the following:
$$
c_N^{m-1}y_{n+1}^{-1}=[y_{n+1} c_N^{N-m+3}]^{-1}
$$
and now use repeated applications of (3.1) to obtain
$$
[y_{n+1} c_N^{N-m+3}]^{-1}=[c_{N-1}^{N-m+3} y_{n+N-m+2}]^{-1}.
$$
This yields the equality
$$
c_n^m y_0=x_{n-m+2}\ldots x_N y_{n+N-m+2}^{-1} c_{N-1}^{N-m+3}
$$
which is what we desired to get the $c_n^m$ past the $y_0$.

To get the $c_n^m$ past $x_k$, use $T$ again. And to go past $y_k$, note that now we can use relation (3.1) directly. Raise the index of the $c_n^m$ if necessary using (i) in the lemma and when the index is high enough, reverse them using (3.1). If the index of $y_k$ becomes zero, use the algorithm above.

Ultimately, this produces an element with a word on $x_n$ and $y_n$, followed by a product $c_n^m c_k^l$. To transform this into a power of a single $c$-generator, use Lemma \ref{2.4} to raise the smaller index of $n$ and $k$, using (i) if $n$ needs to be raised, and (ii) if $k$ is smaller. The only penalty of doing this is introducing $x$-generators to the left or right of the pair of $c$-generators, which works well for our purposes. This finishes the process of getting $c_n^m$ past an element of \ft\ and merging it into the following $c_k^l$. Continue this process until only one power of a $c_n$ is left.

This produces an element of the type $pq^{-1}crs^{-1}$, where $p,q,r,s$ are positive words and we can assume (by the properties of \ft) that $q$ and $s$ contain only $x$-generators. To finalize the process, move the word $r$ to the left using the method above, leaving only negative elements to the right of $c$ in a word $wcs$, where now $w$ clearly can have negative elements. Put $w$ in \ft-normal form and finally use $T$ to move the negative $x$-generators to the right of $c$. This finishes the proof of Proposition \ref{2.3}. \qed

This process constructs an easy expression for an element which can be used to solve the word problem and to show that these relators suffice to have a presentation. To prove that all relators in \ref{prez} are enough to give a presentation, observe that given a word in the $x_n,y_n,c_n$ which represents the identity in \ttt, rewrite it in $pcq^{-1}$ form. Since the word represents the identity, apply Lemma \ref{vt-identity-lem} to see that $c$ is actually the identity. And then this means the original element was in \ft\ and is a product of conjugates of the relators for this group, which are in the presentation as well.

This process finishes the proof of Theorem \ref{prez}.

\section{The group \txz\ is simple}

The appearance of the relator (1.5), present already in \ft, was the reason for the 2-torsion observed in the abelianization of \ft. Observe that, as it happens in \ft, the parity of the $y$-generators is preserved in all relations of \ttt, see Theorem \ref{prez}. This shows that there is a homomorphism from \ttt\ onto $\quot{\zz}{2\zz}$, which sends $x_n$ and $c_n$ to 0 and $y_n$ to 1.
\begin{defn} We call $\txz$ the kernel of the map
$ \varphi: \ttt \longrightarrow \quot{\zz}{2\zz}.$
\end{defn}
The group \txz\ is generated by the $x_n$, $c_n$ and by the new generators $z_n=y_{2n}y_{2n+2}$. Hence, the group \ttt\ has no chance of being simple like $T$, but this kernel is.

\begin{thm} The group \txz\ is simple.
\end{thm}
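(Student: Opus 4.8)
The plan is to combine the classical commutator (``disjoint support'') trick used for the simplicity of Thompson's $T$ with the fact that $\langle x_n,c_n\rangle\cong T$ sits inside \txz\ as a simple subgroup. Concretely, I would take a nontrivial normal subgroup $N\trianglelefteq\txz$ and prove $N=\txz$ in three moves: first produce a nontrivial element of $N$ inside the embedded copy of $T$; then invoke simplicity of $T$ to conclude $T\le N$; and finally show that the normal closure of $T$ in \txz\ is all of \txz, so that $N\supseteq\langle\langle T\rangle\rangle=\txz$.

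For the first move, pick $1\ne g\in N$. Since \txz\ acts faithfully by homeomorphisms of the circle, $g$ moves some point, and because the standard intervals (those with endpoints in $\zz[\tau]$) form a basis, I can choose a standard interval $I$ with $g(I)\cap I=\emptyset$. Let $G_I$ be the subgroup of elements of \txz\ supported in $I$; the elements of the copy of $T$ supported in $I$ form a copy $F_I$ of Thompson's group $F$, which is non-abelian, so I may pick $h,h'\in F_I$ with $[h,h']\ne1$, whence also $[h^{-1},h']\ne1$ (as $h$ and $h^{-1}$ have the same centraliser). Because $g(I)\cap I=\emptyset$, the conjugate $ghg^{-1}$ is supported in $g(I)$ and so commutes with everything supported in $I$; a direct computation then gives
$$[[g,h],h']=[h^{-1},h'].$$
The left-hand side lies in $N$ since $g\in N\trianglelefteq\txz$, so $N$ contains the nontrivial element $[h^{-1},h']$ of the copy of $T$ (more generally this argument shows $N\supseteq[G_I,G_I]$). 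Thus $N\cap T$ is a nontrivial normal subgroup of the simple group $T$ \cite{cfp}, forcing $N\cap T=T$, i.e. $T\le N$.

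It remains to show $\langle\langle T\rangle\rangle_{\txz}=\txz$, equivalently that $Q:=\txz/\langle\langle T\rangle\rangle$ is trivial; together with $T\le N\trianglelefteq\txz$ this yields $N=\txz$. In $Q$ every $x_n$ and $c_n$ dies, so $Q$ is generated by the images $\bar z_n$ of $z_n=y_{2n}y_{2n+2}$. Relations (1.4) and (1.5) give $z_n^2=x_{2n}x_{2n+1}z_{n+1}$, so $\bar z_{n+1}=\bar z_n^{\,2}$ in $Q$. On the other hand, conjugating $z_m$ by a $c_n$ with $n$ large and applying (3.1), (1.2) and (2.3) shows $c_nz_mc_n^{-1}=y_{2m-1}y_{2m+1}\,w$ and $c_n(y_{2m-1}y_{2m+1})c_n^{-1}=z_{m-1}w'$ with $w,w'$ words in the $x_i^{\pm1}$; since $c_n$ and the $x_i$ are trivial in $Q$, these give $\bar z_m=\overline{y_{2m-1}y_{2m+1}}=\bar z_{m-1}$. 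Hence all $\bar z_m$ coincide, and combined with $\bar z_{n+1}=\bar z_n^{\,2}$ this forces $\bar z=\bar z^{\,2}$, so $\bar z=1$. Therefore $Q$ is trivial and $N=\txz$.

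The routine ingredients are the commutator identity and the embedding of the simple $T$; the part that genuinely uses the group is the last step. The main obstacle I expect is making the collapse of $Q$ rigorous: one has to track carefully the $x$-corrections produced both by squaring the $z_n$ and by conjugating them past the $c_n$, and verify that every rewriting used is available among the relators of Theorem \ref{prez}, so that the identities $\bar z_{n+1}=\bar z_n^2$ and $\bar z_m=\bar z_{m-1}$ hold on the nose rather than merely heuristically.
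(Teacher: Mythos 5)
Your proof is correct, but it takes a genuinely different route from the paper's. The paper's argument is algebraic and normal-form based: given $1\neq a\in N$, it writes $a=pcq^{-1}$ with $p,q$ positive words and $c$ of finite order, uses torsion-freeness of \ft\ to extract a nontrivial element of \ft\ lying in the kernel of $\theta\colon\txz\rightarrow\quot{\txz}{N}$, invokes the dichotomy that a proper quotient of \ft\ is abelian, and then kills all generators of the quotient one by one using relations (2.1)--(2.3), (3.1) and (3.2). You instead run the classical disjoint-support double-commutator trick to drop a nontrivial element of $N$ into the embedded copy of $T=\langle x_n,c_n\rangle$, quote simplicity of $T$ from \cite{cfp} to get $T\leq N$, and then prove $\langle\langle T\rangle\rangle=\txz$ by a relation computation. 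Your computations do check out: relations (1.4) and (1.5) give $z_n^2=y_{2n}^2\,y_{2n+3}y_{2n+2}=x_{2n}x_{2n+1}z_{n+1}$, and (3.1) combined with (2.3) gives $c_ny_{k+1}c_n^{-1}=y_kx_n^{-1}$ for $k<n$, whence $c_nz_mc_n^{-1}=y_{2m-1}y_{2m+1}x_{n+1}^{-1}x_n^{-1}$ and $c_n(y_{2m-1}y_{2m+1})c_n^{-1}=z_{m-1}x_{n+1}^{-1}x_n^{-1}$ for $m\geq 1$ and $n$ large, so $\bar z_{n+1}=\bar z_n^2$ and $\bar z_m=\bar z_{m-1}$ hold on the nose and force $\bar z_n=1$. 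Comparing what each approach buys: the paper stays inside the combinatorics of the presentation and leans on the structure theory of \ft\ from \cite{ftau}, while you lean on the simplicity of $T$ and on dynamics; a concrete advantage of your route is that every conjugation happens inside \txz\ (you conjugate $g\in N$ only by $h,h'\in T\leq\txz$), whereas in the paper's step from $\theta(a)=1$ to $\theta(p^{-1}q)=\theta(c)$ the conjugating element $p$ may have odd parity, hence lie outside \txz, a point the paper waves away with ``the parity of $y$ is irrelevant''. Two small caveats on your write-up: first, $I$ should be taken to be a leaf interval of an iterated $x$-caret subdivision (such intervals still form a neighbourhood basis of the circle), not an arbitrary interval with endpoints in $\zz[\tau]$, so that the nonabelian group $F_I$ genuinely sits inside $\langle x_n,c_n\rangle$; second, your last step uses the generation statement $\txz=\langle x_n,c_n,z_n\rangle$ exactly as asserted (without proof) in the paper just before the theorem, whereas the paper's own argument only needs the a priori weaker statement that \txz\ is generated by the $x_n$, $c_n$, $z_n$ together with all elements $y_iy_j^{-1}$.
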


{\it Proof.} Let $N$ be a normal subgroup of \txz\, and let $a\neq 1$ be an element in $N$. We want to show that $N=\txz$. Let
$$
\theta:\txz\longrightarrow\quot\txz{N}
$$
be the canonical quotient map. Since $a$ is nontrivial, we know it can be written in the form $pcq^{-1}$, and we only need that $p,q\in\ft$, the parity of $y$ is irrelevant for this argument. Since $\theta(a)=1$, we have that $\theta(p^{-1}q)=\theta(c)$, and hence, since $c$ is of finite order, we have that $\theta((p^{-1}q)^n)=1$ for some $n$. We now have two possibilities:
\begin{enumerate}
\item $p^{-1}q\neq1$. If this is the case, and since \ft\ is torsion-free, we have that $(p^{-1}q)^n$ is an element which is in \ft, it is not the identity but it lies in the kernel of $\theta$.
\item $p^{-1}q=1$. Then $\theta(c)=1$. Suppose $c=c_n^m$. Use relation (2.3) to see that $c_n^m=c_n^{m-1}x_nc_{n+1}$ and applying relator (2.1) $m-1$ times, we get
%$c_n^m=x_nc_{n+1}^m$.
$c_n^m=x_{n-(m-1)}c_{n+1}^m$.
Hence, since $\theta(c_n^m)=1$, we have that $\theta(x_{n-(m-1)})=\theta(c_{n+1}^{-m})$. But this means that $\theta(x_{n-(m-1)}^{n+3})=1$, and again we have found a nontrivial element in \ft\ and in the kernel of $\theta$.
\end{enumerate}
Restrict $\theta$ to \ft\ and recall that a quotient of \ft\ is either isomorphic to \ft, or else is abelian. From the two cases above we deduce that $\theta|_{\ft}$ is not an isomorphism, so we conclude that $\theta(\ft)$ is an abelian group.

From here we have that the images by $\theta$ of all $x_n$ and $y_n$ commute. As is done in \cite{cfp} this means that:
\begin{enumerate}
\item From relation (2.1) with $n=2$, $k=1$, namely, $x_1c_3=c_2x_2$, that is, $x_1(x_0^{-2}c_1x_1^2)=(x_0^{-1}c_1x_1)(x_0^{-1}x_1x_0)$ we deduce that $\theta(x_1)=\theta(x_0)$.
\item From relation (2.3) with $n=1$, namely, $c_1=x_1(x_0^{-1}c_1x_1)$, we deduce that $\theta(x_0)=\theta(x_1)=1$.
\item From relation (2.2) with $n=1$, namely, $c_1x_0=(x_0^{-1}c_1x_1)^2$, we deduce that $\theta(c_1)=1$.
\end{enumerate}
Hence $\theta(x_n)=\theta(c_n)=1$ for all $n$. Finally, from relation (3.1) we deduce that $y_ny_{n+1}^{-1}$ also maps to 1, and from here that $\theta(y_iy_j^{-1})=1$ for all $i,j$. From relation (3.2) with $n=1$ we deduce that $z_0=y_0y_2$ is also in the kernel, and by induction for any other $z_n$ from $z_{n-1}=y_{2n-2}y_{2n}$ and $y_{2n-2}^{-1}y_{2n+2}$. So $\theta$ is the trivial map and $\txz=N$.\qed

\section{A presentation for \vt}

The construction of a generating set for \vt\ is straightforward. Following \cite{cfp} and in the same way that we have done for \ttt, we need to add a series of generators $\pi_n$, for $n\ge 0$, which introduce noncyclic permutations to the leaves of a spine. The generator $\pi_n$ has $n+2$ $x$-carets, hence it has $n+3$ leaves, and the permutation just transposes leaves $n+1$ and $n+2$. This is analogous to the generators for $V$ in \cite{cfp}. See the generators $\pi_n$ in Figure \ref{pigens}. It is clear that the series $x_n$, $y_n$, $c_n$ and $\pi_n$ generate \vt. Observe that we write the element in $p\pi q^{-1}$ form and then the permutation $\pi$ is in some $\mathcal S_k$, which is generated by a $k$-cycle and a transposition. Suppose $k\ge 3$, the case $k=2$ is straightforward and is actually in \ttt. Then the $k$-cycle is $c_{k-2}$ and the transposition is $\pi_{k-3}$.

\begin{figure}
  \centerline{\includegraphics[width=60mm]{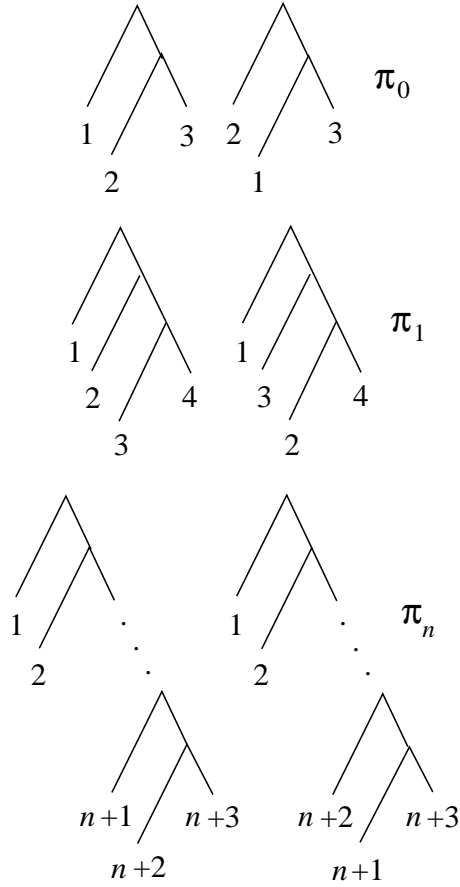}}
  \caption{The $\pi$ generators in \ttt.}\label{pigens}
\end{figure}

Hence, we need to find relations involving these generators. As before, the relations will be obtained by seeing how the generators of different types interact with each other. First of all, see Theorem \ref{prez}, involving $x_n$, $y_n$ and $c_n$, we have all relators from \ttt:

\begin{enumerate}
\item[(1)] The relations involving $x_n$ and $y_n$:
\begin{enumerate}
\item[(1.1)] $x_jx_i=x_ix_{j+1}$, if $i<j$.
\item[(1.2)] $x_jy_i=y_ix_{j+1}$, if $i<j$.
\item[(1.3)] $y_jx_i=x_iy_{j+1}$, if $i<j$.
\item[(1.4)] $y_jy_i=y_iy_{j+1}$, if $i<j$.
\item[(1.5)] $y_n^2=x_nx_{n+1}$.
\end{enumerate}
\item[(2)] The relations involving $x_n$ and $c_n$:
\begin{enumerate}
\item[(2.1)] $x_kc_{n+1}=c_{n}x_{k+1}$, if $k<n$.
\item[(2.2)] $c_nx_0=c_{n+1}^2$.
\item[(2.3)] $c_n=x_nc_{n+1}$.
\item[(2.4)] $c_n^{n+2}=1$.
\end{enumerate}
\item[(3)] The relations involving $c_n$ and $y_n$:
\begin{enumerate}
\item[(3.1)] $y_kc_{n+1}=c_{n}y_{k+1}$, if $k<n$.
\item[(3.2)] $c_ny_0=y_{n+1}^{-1}c_{n+1}^2$.
\end{enumerate}
\end{enumerate}

Observe that the combinatorics involving $x_n$, $c_n$ and $\pi_n$ are the exact same as in $V$, the only difference is that a binary caret is replaced by an $x$-caret, but the methods are exactly the same. Hence, we have all relators from $V$
(as before when we had all relators for $T$ in \ttt).

\begin{enumerate}
\item[(4)] The relations involving $x_n$ and $\pi_n$:
\begin{enumerate}
\item[(4.1)] $\pi_i x_j = x_j\pi_i$ if $j \geq i+2.$
\item[(4.2)] $\pi_i x_{i+1} = x_i\pi_{i+1}\pi_i.$
\item[(4.3)] $\pi_i x_i = x_{i+1}\pi_i\pi_{i+1}.$
\item[(4.4)] $\pi_i x_j = x_j \pi_{i+1}$ if $0 \leq j <i.$
\item[(4.5)] $\pi_i^2 =1.$
\item[(4.6)] $(\pi_{i+1}\pi_i)^3 =1.$
\item[(4.7)] $\pi_i\pi_j=\pi_j\pi_i$ if $j \geq i+2.$
\end{enumerate}
\item[(5)] The relations involving $c_n$ and $\pi_n$:
\begin{enumerate}
\item[(5.1)] $c_n\pi_k = \pi_{k-1}c_n.$
\item[(5.2)] $c_n\pi_0 = \pi_0 \cdots \pi_{n-1}c_n^2.$
\item[(5.3)] $c_n^2\pi_0 = \pi_{n-1}\cdots \pi_0 c_n.$
\item[(5.4)] $c_n^3\pi_0 = \pi_{n-1}c_n^3.$
\end{enumerate}
\end{enumerate}

So as was the case in \ttt, the only relators we need to add are those which involve $y_n$ and $\pi_n$. But the case here differs from what we have found in \ttt, because the transposition in $\pi_n$ never involves the last leaf of the caret. in \ttt, when we combined $y_n$ with $c_n$, there are some cases where the $y$-caret ends up being on the spine, the last caret of the tree, and since we are taking the spine with only $x$-carets, another $y$-caret had to be added to switch. This is the reason why relations (3.1) and (3.2) have an extra $y$-generator than their counterparts with $x_n$, which are (2.1) and (2.2). But this does not happen here, the relations involving $y_n$ and $\pi_n$ look exactly the same as for $x_n$, so they are the same as (4.1)-(4.4) but replacing $x$ by $y$:

\begin{enumerate}
\item[(6)] The relations involving $y_n$ and $\pi_n$:
\begin{enumerate}
\item[(6.1)] $\pi_i y_j = y_j\pi_i$ if $j \geq i+2.$
\item[(6.2)] $\pi_i y_{i+1} = y_i\pi_{i+1}\pi_i.$
\item[(6.3)] $\pi_i y_i = y_{i+1}\pi_i\pi_{i+1}.$
\item[(6.4)] $\pi_i y_j = y_j \pi_{i+1}$ if $0 \leq j <i.$
\end{enumerate}
\end{enumerate}

And this fact is the reason why now the proof of the fact that these are all the relations we need is straightforward. For the interactions between $x_n$, $y_n$ and $c_n$ we already have the \ttt-relators, and the way the $\pi_n$ interact with the other three series of generators is exactly the same as in $V$. Hence, all results for $V$ in \cite{cfp} apply here, and clearly an element can be put in $p\pi q^{-1}$ form using all these relations. Once it is in this form, if it was the identity, it is in \ft\ and it is a product of the relators (1.1)-(1.5). This finishes the construction of the presentation for \vt.

\section{The group $V_{xz}$ is simple}\label{index2-sec}
Inspecting the presentation for \vt\ we observe the same phenomenon as in \ft\ and \ttt, namely, that the $y$-generators only appear in the relators an even number of times. Hence there is a well-defined epimorphism
$$ \varphi: \vt \longrightarrow \quot{\zz}{2\zz},$$
given by $\varphi(x_n)=\varphi(c_n)=\varphi(\pi_n)=0,$ and $\varphi(y_n)=1.$ The kernel of this epimorphism is generated by those elements in $\vt$ where $y$-generators appear an even number of times.

In particular, $\vt$ is not simple. We will consider the kernel of the homomorphism $\varphi$:

\begin{defn} We call $\vxz$ the kernel of the map
$$ \varphi: \vt \longrightarrow \quot{\zz}{2\zz}.$$
\end{defn}

\begin{thm}\label{thethm}
The group $\vxz$ is a simple group.
\end{thm}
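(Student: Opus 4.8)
The plan is to mimic the strategy used for $\txz$ but adapt it to the richer permutation structure of $V$, where the finite quotients are the full symmetric groups $\mathcal S_k$ rather than cyclic groups. Let $N \trianglelefteq \vxz$ be a nontrivial normal subgroup, pick $1 \neq a \in N$, and let $\theta \colon \vxz \to \quot{\vxz}{N}$ be the quotient map. By Corollary~\ref{SNF-cor} I can write $a$ in $p\pi q^{-1}$ form, where $p,q \in \ft$ (the parity of the $y$-generators being immaterial for this argument) and $\pi$ lies in some $\mathcal S_k$, generated by $c_{k-2}$ and $\pi_{k-3}$. The first goal is to manufacture a nontrivial element of $\ft$ inside $\ker\theta$, exactly as in the $\txz$ case. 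From $\theta(a)=1$ I get $\theta(p^{-1}q)=\theta(\pi)$; since $\pi$ has finite order in $\mathcal S_k$, some power $(p^{-1}q)^r$ lands in $\ker\theta$. If $p^{-1}q \neq 1$ this is already a nontrivial torsion-free element of $\ft$ in the kernel; if $p^{-1}q = 1$ then $\theta(\pi)=1$ and I must show a nontrivial element of $\ft$ dies, which I expect to handle by expanding $\pi$ in terms of the $c_n$ and $\pi_n$ generators (via (2.1), (2.3), (5.x)) and extracting an $x$-generator relation as in the $\txz$ proof.

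Once I have a nontrivial element of $\ft$ in $\ker\theta$, I invoke the dichotomy (used already for $\txz$) that any proper quotient of $\ft$ is abelian, so $\theta(\ft)$ is abelian. From here the argument runs parallel to the $\txz$ case: the commuting of the images of all $x_n$ and $y_n$, combined with relations (2.1), (2.2), (2.3), forces $\theta(x_n)=\theta(c_n)=1$ for all $n$. Then relations (3.1) and (3.2) show $\theta(y_iy_j^{-1})=1$ for all $i,j$ and that the generators $z_n$ of $\vxz$ die. The genuinely new work is to also kill the $\pi_n$. For this I would use relations (5.1)--(5.4) together with $\theta(c_n)=1$: for instance (5.1) gives $\theta(\pi_k)=\theta(\pi_{k-1})$, so all $\pi_n$ have a common image, and then (5.2) with the $c_n$ trivialised collapses that common image to the identity. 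Combining everything, $\theta$ kills every generator of $\vxz$, so $\theta$ is trivial and $N=\vxz$.

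I anticipate the main obstacle to be the step $p^{-1}q=1$, i.e.\ producing a nontrivial $\ft$-element in $\ker\theta$ purely from $\theta(\pi)=1$ when the only information is about a permutation. In the $\txz$ case this worked cleanly because $\pi=c_n^m$ rewrites as $x_{n-(m-1)}c_{n+1}^m$ via (2.3) and (2.1), yielding a torsion relation on a single $x$-generator. In $\vt$ the permutation $\pi$ can be an arbitrary element of $\mathcal S_k$, so I would first reduce to the case $\pi = c_{k-2}$ (a $k$-cycle) or $\pi=\pi_{k-3}$ (a transposition) and handle each: the cycle case is identical to $\txz$, while for the transposition I expect to use (4.2)/(4.3) to trade $\pi_{k-3}$ for an expression mixing $x$-generators and other $\pi$'s, again distilling a nontrivial $\ft$-relation. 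The subtlety is ensuring that the element I extract genuinely lies in $\ft$ (permutation-trivial) and is nontrivial there; this is where I would need to be careful rather than routine. Everything downstream—the abelianisation of $\theta(\ft)$ and the trivialisation of all four generator families—is then a direct, if slightly longer, reprise of the $\txz$ argument.
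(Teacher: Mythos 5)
Your proposal takes a genuinely different route from the paper: the paper proves Theorem \ref{thethm} by the Brin--Rubin argument (Lemmas \ref{perm-lem} through \ref{normal-lem}), showing that $\vxz$ is generated by proper transpositions, that any two proper transpositions are conjugate \emph{inside} $\vxz$, and that every nontrivial normal subgroup contains one. Your adaptation of the $\txz$ argument has a genuine gap at exactly the step you flag, and the repair you sketch does not work. In the case $p^{-1}q=1$, all you know is that $\theta(\pi)=1$ for a single nontrivial permutation $\pi\in\mathcal S_k$. Expanding $\pi$ as a word $w$ in the generators $c_n,\pi_n$ yields only the one relation $\theta(w)=1$; unlike the $\txz$ case, where $\pi$ is literally a power $c_n^m$ and relations (2.1), (2.3), (2.4) convert this into $\theta\bigl(x_{n-(m-1)}^{n+3}\bigr)=1$, you cannot isolate the image of any individual generator from an arbitrary word, and no rewriting process ``reduces'' a general element of $\mathcal S_k$ to $c_{k-2}$ or $\pi_{k-3}$ (those two elements generate $\mathcal S_k$; a general permutation is not carried to one of them by the relations). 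What fills the hole is normality, not rewriting: the spine permutations on $k$ leaves form a copy of $\mathcal S_k$ inside $\vxz$, so $N\cap\mathcal S_k$ is a nontrivial normal subgroup of $\mathcal S_k$ and one can extract from it, say, a cycle of odd length to feed into the $\txz$ computation; alternatively (and more cleanly), pick a nontrivial even-parity $f\in\ft$ supported on a leaf interval that $\pi$ moves off itself, so that $[\pi,f]=(\pi f\pi^{-1})f^{-1}$ is a nontrivial element of $\ft$ of even parity lying in $N$. Either repair imports precisely the finite-permutation-group and commutator ideas that the paper's own proof is built on, so the step you deferred as a ``subtlety'' is in fact the heart of the matter; the downstream work (killing the $x_n$, $c_n$, $y_iy_j^{\pm1}$, and the $\pi_n$ via (5.1)--(5.2)) is indeed routine once that step is secured.

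A second problem, which you inherit from the printed $\txz$ argument but which matters here because the paper's proof of Theorem \ref{thethm} is explicitly careful about it: $\theta$ is defined only on $\vxz$, yet your manipulations apply it to elements that need not lie there. In $a=p\pi q^{-1}$ the words $p,q$ may each have odd $y$-parity (only their parities agree), so ``$\theta(p^{-1}q)=\theta(\pi)$'' presupposes that $p^{-1}ap\in N$, i.e.\ that $N$ is normalized by $p\notin\vxz$; similarly, deducing $\theta(y_iy_j^{-1})=1$ from (3.1) uses conjugation of elements of $N$ by single $y$-generators, and ``restricting $\theta$ to $\ft$'' to invoke the quotient dichotomy is not literal, since $\ft\not\subseteq\vxz$. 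These points are repairable (for instance, replace $N$ by $p^{-1}Np$, which is again normal in $\vxz$ because $\vxz\trianglelefteq\vt$, and note that proving the conjugate equals $\vxz$ suffices), but the bookkeeping has to be done: it is exactly the issue the paper handles when, in the conjugacy lemma, it adds a redundant $y$-caret to one tree and an $x$-caret to the other so that the conjugating element lands in $\vxz$.
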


Here we follow an argument described by Brin in \cite{brin2v}, and attributed to M. Rubin.

\begin{lem}\label{perm-lem}
$\vxz$ is generated by permutations.
\end{lem}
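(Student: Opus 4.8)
The plan is to show that every element of $\vxz$ can be rewritten, using the relators of $\vt$, as a word in which only the permutation generators $\pi_n$ (and their conjugates by positive/negative words, reduced back to permutations) appear. The natural starting point is the $p\pi q^{-1}$ normal form from Corollary \ref{SNF-cor}: an arbitrary element of $\vxz$ is $v = p\,\pi\,q^{-1}$ with $p,q$ positive words in the $x_i,y_i$ and $\pi \in \mathcal S_k$. The first step is to observe that membership in $\vxz$ forces a parity condition: since $\varphi$ kills $x_n,c_n,\pi_n$ and sends $y_n \mapsto 1$, and since $\varphi(v)=0$, the total number of $y$-generators appearing in $p$ and $q$ together is even.

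The key idea, following Brin's argument, is to trade the linear ``slope'' data carried by $p$ and $q$ for permutation data. First I would reduce to the case where $p$ and $q$ are built only from $x$-carets by repeatedly applying relator (1.5), $y_n^2 = x_nx_{n+1}$, to eliminate $y$-generators in pairs; the parity condition guarantees the $y$'s can be paired up, and each such replacement substitutes a genuinely permutation-free (i.e.\ $x$-only) piece. Concretely, I would push the $y$-generators together using the commutation-type relators (1.2)--(1.4) so that two of them become adjacent with equal indices, then collapse them via (1.5). The output is an expression in which $p$ and $q$ involve only $x$-carets, so $v$ now lies in the subgroup generated by the $x_n$, $c_n$ and $\pi_n$ — which is exactly a copy of ordinary Thompson's $V$ sitting inside $\vxz$.

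Once inside this copy of $V$, I would invoke the well-known fact (the analogue of what \cite{cfp} establishes for $V$) that $V$ is generated by the transpositions $\pi_n$ together with the $x_n$ and $c_n$, and more to the point that every element of $V$ is a product of \emph{transpositions of basic intervals}, i.e.\ of permutation generators conjugated by positive words. The relations (4.1)--(4.7) and (5.1)--(5.4) let one conjugate any $\pi_n$ past the $x$'s and $c$'s, so a product $p\,\pi\,q^{-1}$ with $p,q$ positive $x$-words can be rewritten as a product of conjugated transpositions. This realises $v$ as a product of permutations, completing the lemma.

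The main obstacle I anticipate is the bookkeeping in the elimination step: applying (1.5) to collapse a pair of $y$'s changes the number of leaves and hence shifts all the indices and the ambient permutation $\pi$, so one must check that the permutation part transforms correctly (the counterpart caret must be inserted at the leaf dictated by $\pi$, as noted in the discussion of composition). Ensuring that each $y$-pair can actually be brought into the adjacent, equal-index position required by (1.5) — rather than being obstructed by the permutation interleaving the relevant leaves — is the delicate point; here one uses that conjugating by $\pi$ and by the positive words only permutes and relabels carets without affecting the underlying parity, so the reduction can always be carried through. The remaining verification, that the resulting $x$-only element genuinely lies in the standard copy of $V$ and so is a product of permutations, is then routine given the relators already assembled.
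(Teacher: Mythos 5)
Your first step is where the proof breaks, and it cannot be repaired by bookkeeping. You claim that, because an element of $\vxz$ has an even total number of $y$-generators in $p\pi q^{-1}$ form, the relators (1.2)--(1.5) allow the $y$'s to be cancelled in pairs, so that $v$ ends up in the copy of $V$ generated by the $x_n$, $c_n$, $\pi_n$. This is false on two levels. At the level of the rewriting itself, the commutation relators (1.2)--(1.4) only shift indices and can never produce two adjacent $y$'s \emph{with equal index}, which is what (1.5) needs: for instance $y_0y_2=y_1y_0$, and no further move applies. At the level of the conclusion, the statement you are trying to reach is simply not true: the element $z_0=y_0y_2$ lies in $\vxz$ but not in the copy of $V$. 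Indeed, any element of that copy is represented by a pair of $x$-only trees together with a permutation of leaves; since $z_0$ belongs to $\ft$ it is a continuous increasing map of $[0,1]$, which forces any such permutation to be order-preserving, hence trivial, so $z_0$ would lie in the copy of $F$ generated by the $x_n$ alone. But a computation in the abelianisation of $\ft$ (which is $\zz^2\oplus\quot{\zz}{2\zz}$, with the image of $z_0$ hitting the torsion factor that the image of $\langle x_n\rangle$ misses) shows $z_0\notin\langle x_n\rangle$; this is exactly why $\fxz$ and $\txz$ need the $z_n=y_{2n}y_{2n+2}$ as additional generators. So no rewriting with the relators of $\vt$ applied to $p$ and $q$ can remove the $y$'s. (A smaller gap: evenness of the \emph{total} $y$-count does not make the counts in $p$ and $q$ separately even, and you never explain how a $y$ is moved across $\pi$.)

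The paper's proof succeeds precisely because it does not attempt to rewrite $v$: it \emph{changes} $v$ by premultiplying with permutations, which is legitimate since the goal is an expression $v=\rho^{-1}u$ with $\rho$ a product of permutations. Working with the diagram $(T,\pi,S)$ of Lemma \ref{el-vt-lem} (where $S$ is all $x$-carets and $T$ has an even number of $y$-carets), one adds carets and performs basic moves so that one $y$-caret of $T$ becomes exposed while a \emph{new} $y$-caret $d_1$ is forced to appear in the target tree $S$; premultiplying by a permutation $\sigma$ that matches the exposed $y$-caret of $T$ with $d_1$ makes this pair redundant, and deleting it removes two $y$-carets from $T$. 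This cancellation of $y$-carets \emph{across the two trees of the diagram, at the cost of a permutation factor}, is the essential idea your proposal is missing, and it is here (not in any word-rewriting) that the parity hypothesis enters. Your final step --- Brin's argument expressing an $x$-only element as a product of permutations --- agrees with the paper's last step and is fine, but the reduction to that case is the actual content of the lemma.
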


\begin{proof} Let $v \in \vxz.$ Then, by Lemma \ref{el-vt-lem} and the parity of $y$, it can be represented by a triple $(T,\pi ,S)$ such that $T$ and $S$ have $k$ leaves, $S$ has no $y$-carets and the only $y$-carets in $T$ are an even number  with no left children, and $\pi \in \mathcal S_k.$

\begin{figure}
  \centerline{\includegraphics[width=85mm]{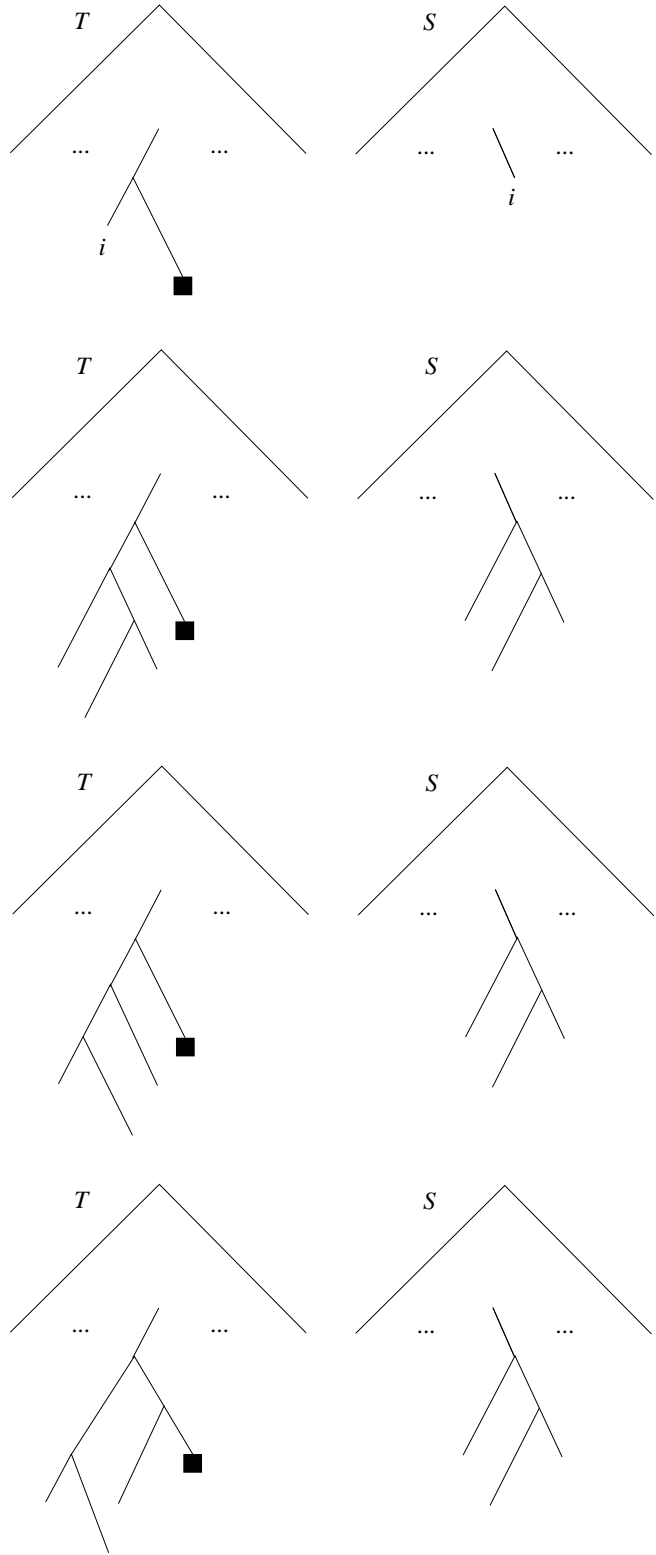}}
  \caption{The process of creating an exposed $y$-caret in the proof of Lemma \ref{perm-lem}. The black box indicates that a subtree is present in that leaf.}\label{exposed}
\end{figure}

To begin the process we need an exposed $y$-caret in the tree $T$, i.e. a $y$-caret with no children. Observe that we only know that the $y$-carets have no left children, but they could have right ones. Take any $y$-caret in $T$ and make it exposed the following way. See Figure \ref{exposed} for reference. Add two $x$-carets to its left leaf (and the corresponding ones in $S$). Perform a basic move on these two $x$-carets to transform them to $y$-carets, and this produces three consecutive $y$-carets. Now switch the two top ones back to $x$-type, again using a basic move. This makes the bottom $y$-caret exposed. The number of $y$-carets has not changed. Note that this process is the reverse of a \emph{hidden cancellation} described in \cite[Section 6]{ftau}.

Now we have ensured that in $T$ there is at least one exposed caret.
Pick a pair of $y$-carets $c_1$ and $c_2$ in $T$ and assume $c_1$ is exposed.
Then add a $y$-caret to the left leaf of $c_2$; this means that we have to add a $y$-caret $d_1$ to the corresponding leaf in $S$. Switch $c_2$ and the recently added caret to type $x$ using a basic move.
We have now a representation of $v$ by a triple $(T', \pi', S'),$ where $T'$ and $S'$ are now trees with $k+1$ carets, and $\pi' \in \mathcal S_{k+1},$ and the carets $c_1$ and $d_1$ are both exposed, one in each tree.

We now precompose with a permutation $\sigma$ of the leaves of $T',$ so that $c_1$ in the left-most tree has the same labelling as $d_1$ in $S'.$ We now have obtained an element $v_1=\sigma v$ represented by
$$(T',\sigma, T')(T', \pi' , S'),$$
where the $y$-carets $c_1$ and $d_1$ map to each other, so they have become redundant and can be removed. Hence $v_1$ is now represented by a triple as above with $k+2$ leaves as above, but with two fewer $y$-carets.

We repeat this process until we have $u = \rho v,$ where $\rho$ is a product of permutations and $u$ is an element represented by a tree-pair with only $x$-carets.

We can now follow the procedure set out by Brin as if we were in $V$: find a pair of exposed carets, one in each tree, premultiply with a permutation making this pair of carets redundant, remove the carets to obtain an element represented by a tree with fewer carets. Then continue this process until no tree is left. Our original element is now a product of permutations.
\end{proof}

\begin{defn}
We say an element $\pi \in \vt$ is a proper transposition if $\pi$ is a transposition which can be represented by a triple
$(T, \pi, T),$ where $T$ is a tree with at least three carets.
\end{defn}

\begin{cor}
$\vxz$ is generated by proper transpositions.
\end{cor}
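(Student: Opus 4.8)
The corollary states that $\vxz$ is generated by proper transpositions. We already know from Lemma \ref{perm-lem} that $\vxz$ is generated by permutations, i.e. by elements represented by triples $(T,\pi,T)$ with the same source and target tree. So the plan is to reduce from arbitrary permutations to proper transpositions in two stages: first reduce a general permutation to a product of transpositions, and then ensure each transposition can be realised \emph{properly}, meaning on a tree with at least three carets.

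The first step is purely combinatorial. Given a permutation $v=(T,\pi,T)$, the symmetric group $\mathcal S_k$ on the $k$ leaves of $T$ is generated by transpositions, so $\pi=\tau_1\cdots\tau_r$ with each $\tau_j$ a transposition of leaves. The key observation is that each factor $(T,\tau_j,T)$ is itself an element of $\vxz$: since $T$ has no $y$-carets in the reduced form used here (or, more carefully, since premultiplying by a permutation of the leaves does not alter the parity of the $y$-generators), a transposition contributes trivially to $\varphi$, so it lies in the kernel. Thus $v=(T,\tau_1,T)\cdots(T,\tau_r,T)$ expresses $v$ as a product of transpositions in $\vxz$, each represented on the common tree $T$.

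The second step is to upgrade each transposition $(T,\tau,T)$ to a \emph{proper} one, i.e. to arrange that the underlying tree has at least three carets. If $T$ already has at least three carets there is nothing to do. If $T$ has only one or two carets (so at most three leaves), I would subdivide: add a pair of matching $x$-carets to $T$ in both source and target at a leaf \emph{not} moved by $\tau$, extending $\tau$ by the identity on the new leaves. This represents the same element of $\vxz$ on a larger tree, and after finitely many such expansions the tree has at least three carets while the permutation remains a single transposition. The only subtlety is to add carets at a leaf fixed by $\tau$, which is always possible once $k\ge 2$; the added carets must be $x$-type so as not to disturb the parity of $y$-generators, keeping the element in $\vxz$. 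Composing the two steps, every generator from Lemma \ref{perm-lem} is a product of proper transpositions, so $\vxz$ is generated by proper transpositions.

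The main obstacle I anticipate is bookkeeping rather than conceptual: one must check carefully that splitting $\pi$ into transpositions and then enlarging the tree genuinely produces elements of the kernel $\vxz$ and not merely of $\vt$. Because the parity homomorphism $\varphi$ counts only $y$-generators, and both the factorisation into transpositions and the caret-additions use $x$-carets and leaf permutations (which $\varphi$ sends to $0$), each intermediate element lands in $\vxz$; this is the point that needs to be stated explicitly. A secondary technical point is confirming that a transposition on a tree with at least three carets indeed matches the definition of a proper transposition given above, which it does by construction since the representing triple $(T,\tau,T)$ has matching source and target tree $T$ with $\ge 3$ carets.
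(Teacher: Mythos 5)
Your strategy is the same as the paper's (invoke Lemma \ref{perm-lem}, split each permutation $(T,\pi,T)$ into transpositions on the common tree $T$, then enlarge small trees by redundant carets), and most of it is fine, but your second step contains a false claim at the smallest case. You assert that a leaf fixed by $\tau$ always exists ``once $k\ge 2$''. When $T$ has a single caret we have $k=2$ and the only nontrivial transposition swaps both leaves, so there is no fixed leaf. Moreover this is not a removable technicality: the element $(T,\tau,T)$ swapping the two leaves of a one-caret tree lies in $\vxz$ (equal trees on both sides, so even $y$-parity), yet it is not equal to \emph{any} single proper transposition, no matter how one expands. Indeed, a proper transposition is supported on a tree with at least four leaves and moves only two of them, so it fixes pointwise a nondegenerate subinterval of $(0,1]$; the swap of the two halves of the interval fixes no subinterval pointwise. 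Hence your plan of ``upgrading each transposition to a proper one'' cannot succeed for this element, and the proof as written has a genuine (if small) gap.

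The repair is exactly what the paper does: expand anyway, and give up the requirement that the expanded element remain a single transposition. Adding a caret to a moved leaf $\ell$ of the source forces adding a caret to $\tau(\ell)$ in the target; doing this at \emph{both} leaves of the one-caret tree keeps source and target trees equal and yields $(T',\sigma,T')$, where $T'$ has three carets and, labelling the new leaves $1a,1b,2a,2b$, the permutation $\sigma$ is the product of the disjoint transpositions $(1a\,2a)$ and $(1b\,2b)$. Each factor is now a proper transposition, so the original swap is a product of two proper transpositions. In general: expand any small tree-pair to one with at least three carets, observe the result is a permutation (not necessarily a transposition) on that larger tree, and re-decompose it into transpositions there, which are automatically proper. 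With that one case handled this way, your argument is correct and coincides with the paper's.
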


\begin{proof} Suppose we have a transposition represented by a tree-pair with two or fewer carets, then adding redundant carets simultaneously on both sides gives a permutation with the right number of carets, which, in turn is a product of proper transpositions.
\end{proof}

\begin{lem}
Any two proper transpositions are conjugate in $\vxz.$
\end{lem}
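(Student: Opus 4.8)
The plan is to first show that the two given proper transpositions are conjugate inside the larger group \vt, and then to correct the parity of the conjugating element so that it lies in the kernel \vxz. Let $\sigma_1$ and $\sigma_2$ be proper transpositions, represented by triples $(T_1,\sigma_1,T_1)$ and $(T_2,\sigma_2,T_2)$ with each $T_i$ having at least three carets. Thus each $\sigma_i$ swaps a pair of basic intervals and is the identity on the complement, and since a transposition fixes all but two leaves, the complement contains at least one further basic interval.

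For the conjugacy in \vt, I would first add carets to leaves \emph{fixed} by the transpositions (never to a swapped leaf, so that each $\sigma_i$ remains a transposition, extended by the identity on the new leaves) until $T_1$ and $T_2$ have the same number of leaves. I then build $g\in\vt$ as a tree-permutation whose source tree is $T_1$, whose target tree is $T_2$, and whose leaf permutation carries the swapped pair of $\sigma_1$ onto the swapped pair of $\sigma_2$ in the matching order, acting as an arbitrary bijection on the remaining leaves. Because $g$ sends leaf intervals of $T_1$ affinely to leaf intervals of $T_2$, the conjugate $g\sigma_1 g^{-1}$ restricted to a swapped interval of $\sigma_2$ is a composite of orientation-preserving affine maps with slopes powers of $\tau$, hence is the unique such bijection onto the other swapped interval, while on every non-swapped leaf it is the identity. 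Therefore $g\sigma_1 g^{-1}=\sigma_2$, establishing conjugacy in \vt.

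The genuinely new point is that $g$ may fail to lie in \vxz, that is, $\varphi(g)$ may equal $1$. To repair this I would produce $h\in\vt$ that commutes with $\sigma_1$ and satisfies $\varphi(h)=1$; then $\varphi(gh)=\varphi(g)+\varphi(h)=0$, so $gh\in\vxz$, and $(gh)\sigma_1(gh)^{-1}=g\sigma_1 g^{-1}=\sigma_2$, giving a conjugator in the kernel. Such an $h$ exists because $\sigma_1$, being proper, fixes at least one basic interval $J$: inside the copy of \ft\ supported on $J$ (the subgroup of elements that are the identity outside $J$ and act as \ft\ under the affine identification of $J$ with $[0,1]$), take $h$ to be the image of the generator $y_0$. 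Since $h$ is supported in $J$ while $\sigma_1$ is supported on the two swapped intervals, which are disjoint from $J$, the elements $h$ and $\sigma_1$ have disjoint supports and hence commute.

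The step requiring care, and what I regard as the crux, is the verification that $\varphi(h)=1$. Computing $\varphi$ on a tree-pair diagram as the number of $y$-carets in the source tree plus the number in the target tree modulo $2$, the carets lying above the leaf $J$ form the common part of the two trees of $h$ and so occur in both, cancelling modulo $2$; what remains is the $y$-caret count of $y_0$, namely $1$. This is precisely the feature of \vt\ responsible for the index-two subgroup, so it is where the argument departs from the classical situation, whereas the conjugacy step inside \vt\ is routine and formally identical to the corresponding fact for $V$.
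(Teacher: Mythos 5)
Your proof is correct and follows essentially the same route as the paper: equalize the number of leaves by adding carets at the uninvolved leaves guaranteed by properness, build the conjugator in \vt\ as a tree-pair whose permutation carries the swapped leaves of one transposition onto those of the other, and then exploit a spare fixed leaf to correct parity. Your parity fix --- post-multiplying by a commuting odd element $h$ supported on a fixed leaf interval --- is just a repackaging of the paper's move of attaching a redundant $y$-caret to one tree of the conjugator and a corresponding $x$-caret to the other, since that modification amounts precisely to multiplication by such an $h$.
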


\begin{proof} Let $\pi_1$ and $\pi_2$ be two proper transpositions represented by $(T,\pi_1,T)$ and $(S, \pi_2, S)$ respectively.
Since both transpositions are proper, we have ``uninvolved" leaves in both trees, so we can add redundant carets to ensure that $T$ and $S$ have the same number of leaves. Now the element $v = (T, \sigma, S),$ where $\sigma$ maps the leaves involved in $\pi_1$ to the leaves involved in $\pi_2$ conjugates $\pi_2$ into $\pi_1.$ A priori this is an element of $\vt.$  Suppose that both $S$ and $T$ have the same parity of $y$-leaves. Then, automatically, $v \in \vxz.$ Should the parity of $y$-leaves in $T$ and $S$ differ, we can always add a redundant $y$-caret to $T$ and a corresponding redundant $x$-caret to $S$ to obtain a tree-pair $(T', \sigma' ,S')$ representing an element of $\vxz$ conjugating $\pi_2$ into $\pi_1.$
\end{proof}

\begin{lem}\label{normal-lem}
Any normal subgroup $N$ of $\vxz$ contains a proper transposition.
\end{lem}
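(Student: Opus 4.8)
The plan is to take a nontrivial $g\in N$ and exploit the abundance of permutations supported in a single basic interval, following the commutator trick of Rubin used by Brin in \cite{brin2v}. First I would locate a basic interval $A$ (a leaf of some iterated subdivision) with $g(A)\cap A=\emptyset$. Such an $A$ exists because $g\neq 1$ moves some point off itself, and since $g$ is piecewise-linear with finitely many breaks, a small enough basic interval near such a point is carried entirely off itself; the left-continuity causes no trouble. Write $G_A$ for the subgroup of $\vxz$ consisting of elements supported in $A$.

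The engine of the argument is the commutator identity $[[a,g],b]=[a,b]$, valid for all $a,b\in G_A$. Indeed $[a,g]=a\cdot(ga^{-1}g^{-1})$, and the factor $ga^{-1}g^{-1}$ is supported in $g(A)$, which is disjoint from $A$ and hence commutes with $b$; expanding $[[a,g],b]$ and cancelling the $g(A)$-supported terms leaves exactly $[a,b]$. Since $g\in N$ and $N\trianglelefteq\vxz$, the element $[a,g]$ lies in $N$, and therefore so does $[[a,g],b]=[a,b]$. Thus $[G_A,G_A]\subseteq N$. In particular, for any three pairwise disjoint basic subintervals $X,Y,Z\subset A$ the $3$-cycle $(X\,Y\,Z)$ lies in $N$, being the commutator $[(X\,Y),(X\,Z)]$ of two proper transpositions supported in $A$.

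It then remains to manufacture a single transposition out of these $3$-cycles. I would choose two disjoint basic subintervals $U,W\subset A$ and subdivide each once using carets of the \emph{same} type, obtaining children $U=U_1\sqcup U_2$ and $W=W_1\sqcup W_2$. With this matching choice the proper transposition swapping $U$ and $W$ is exactly the double transposition $(U_1\,W_1)(U_2\,W_2)$, and the permutation identity $(U_1\,W_1\,U_2)(W_1\,U_2\,W_2)=(U_1\,W_1)(U_2\,W_2)$ exhibits it as a product of two of the $3$-cycles already known to lie in $N$. Hence this transposition lies in $N$, and it is proper since the leaves of the ambient tree outside $U\cup W$ are uninvolved.

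The one genuinely $\tau$-specific point — and the step I expect to need the most care — is the claim that the transposition of $U$ and $W$ coincides, as an element of $\vt$, with $(U_1\,W_1)(U_2\,W_2)$. Because the two children of a caret have unequal lengths ($\tau^{n+1}$ and $\tau^{n+2}$) and there are two caret types, one must subdivide $U$ and $W$ by the same caret type so that the orientation-preserving affine map $U\to W$ carries $U_1$ onto $W_1$ and $U_2$ onto $W_2$; only then do the two descriptions agree. Everything else — the existence of $A$, and the membership of the relevant transpositions in $\vxz$ via the parity of $y$ — is routine, so once this alignment is arranged the lemma follows.
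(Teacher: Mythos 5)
Your proof is correct, and it takes a genuinely different route from the paper's, which follows Brin's argument verbatim. The paper works directly with the given $v\in N$: it expands a tree-pair for $v$ so that some node $l$ is carried to a disjoint node $k$, expands $l$ and $k$ by carets of the \emph{same} type, and then obtains the proper transposition in two explicit commutators --- $u=[g,v]\in N$ with $g=(l_0\,l_1)$ equals the disjoint product $(l_0\,l_1)(k_0\,k_1)$, and $w=[h,u]\in N$ with $h=(l_0\,k_0)$ equals $(l_0\,k_0)(l_1\,k_1)$, which is the transposition $(l\,k)$ because the two added carets have the same type. You instead establish the stronger intermediate fact $[G_A,G_A]\subseteq N$ (where $G_A$ is your group of elements of $\vxz$ supported in the displaced interval $A$) via the rigid-stabilizer identity $[[a,g],b]=[a,b]$, harvest $3$-cycles in $N$ as commutators of transpositions, and then assemble a proper transposition from two $3$-cycles. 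Both proofs turn on precisely the $\tau$-specific alignment you isolate: a pairwise swap of the children agrees with the swap of the parents only if the parents are subdivided by carets of the same type, which is exactly the role of the paper's step ``we expand $l$ and $k$ with a caret of the same type''. Your approach buys modularity and a stronger conclusion ($N$ contains all of $[G_A,G_A]$, after which only symmetric-group bookkeeping remains); the paper's buys brevity, producing in two commutators a transposition swapping $l$ directly with its image $k$. Two cosmetic points to tidy: your identity $(U_1\,W_1\,U_2)(W_1\,U_2\,W_2)=(U_1\,W_1)(U_2\,W_2)$ holds in one composition convention (the opposite order yields $(U_1\,U_2)(W_1\,W_2)$), which is immaterial since $N$ contains each $3$-cycle separately and hence their product in either order; and to certify that $(U\,W)$ is proper in the paper's sense you may need to add redundant carets at uninvolved leaves so that the representing tree has at least three carets, exactly as in the corollary following the paper's definition of proper transpositions.
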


\begin{proof} This proof is exactly the proof of Brin \cite{brin2v}, so we only provide an outline here. Let $v  = (T, \pi, S) \in N.$ By expanding we can always represent $v$ by a tree-pair with at least three carets, and so that an interval gets completely moved off itself. We now consider all elements as moving nodes in an infinite tree.  In particular, there is a node $l$ mapping to a node $k$ so that neither is in a subtree containing the other. We expand $l$ and $k$ with a caret of the same type, an $x$-caret, say. Let $g$ be the element switching $(l_o,l_1),$ the leaves of the caret starting in $l$. Then $u = [g,v] \in N$ and is a product of two transpositions. Now, set $h$ to be the transposition switching $l_0$ with $k_0$, the left-hand leaf of the caret starting in $k.$ Then $w = [h,u] \in N$ is a permutation swapping $l_0$ with $k_0$ and $l_1$ with $k_1$, hence $w$ is a transposition switching $l$ with $k$.

\end{proof}

This sequence of results, \ref{perm-lem} through to \ref{normal-lem}, now yield Theorem \ref{thethm}.

\section{A $V$-type Thompson's group with a normal subgroup of index $4.$}

As mentioned above, Higman \cite{higman} showed that some of the $V$-type groups have index-$2$ normal subgroups. This, however, occurs only when the  $n$-ary carets in the trees representing the elements have $n$ being odd. Hence, this is a very different phenomenon than then one we observe for $\vt.$  Let $\beta = \sqrt2-1,$ i.e. the positive root of $X^2+2X-1.$  In \cite{clearyirrold}, Cleary also showed that the group $G(I, \zz[\beta], \langle \beta \rangle) = \fb$ can be seen as a group of rearrangements of $\beta$-regular subdivisions of the unit interval. In his thesis, J. Brown \cite{jb}, shows that elements of $\fb$ can also be represented by tree-pair diagrams, this time having three caret-types with three legs each, see Figure \ref{carets}.

\begin{figure}
\centerline{\includegraphics[width=80mm]{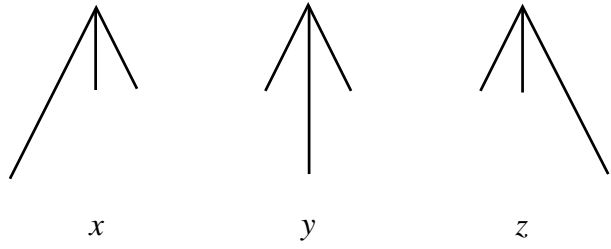}}
\caption{The three different caret types in $V_\beta$.}
\label{carets}
\end{figure}

\noindent He also produced an infinite presentation involving $x$-carets and $y$-carets only:

\begin{prop}\cite[6.1.1.]{jb} A presentation for $\fb$ is:
$$\left< x_n,y_n,\mbox{ for }n\ge 0\,\middle|\,
\begin{array}
l a_ib_j = b_ja_{i+2},\,\mbox{ for all } i>j \mbox{, }\mbox{and } a,b \in \{x,y\},\\
y_k^2 = x_kx_{k+1},\,\mbox{ for }k \geq 0
\end{array}
\right>.
$$
\end{prop}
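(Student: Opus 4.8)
The plan is to follow the three-step strategy used for \ft\ in \cite{ftau}: verify that the listed relations hold in $\fb$, use them to reduce every word to a canonical positive--negative normal form, and then show that the only such form representing the identity is the empty word, so that the given relations generate all relations of $\fb$.

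The first step is routine. Interpreting $x_n$ and $y_n$ as the elements performing, at the $(n+1)$-st leaf, two of the three ternary subdivisions coming from $1=2\beta+\beta^2$, one checks each relation directly on the resulting subdivisions of $[0,1]$. The commutation relations $a_ib_j=b_ja_{i+2}$ record that a ternary caret replaces one leaf by three and hence raises by $2$ the indices of all generators acting to its right; the relation $y_k^2=x_kx_{k+1}$ records a genuine coincidence of subdivisions, and is the ingredient that lets the third caret type be expressed through the $x$- and $y$-carets, so that only two families of generators are needed.

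The core of the argument is the normal form. Using the commutation relations one pushes all inverse letters to the right, writing any word as $pq^{-1}$ with $p,q$ positive, which also establishes that the $x_n,y_n$ generate $\fb$. Then, exactly as in the $\tau$ case, repeated use of $y_k^2=x_kx_{k+1}$ lowers the exponent of every $y_k$ into $\{0,1\}$ and orders the letters, yielding a seminormal form
$$x_0^{a_0}y_0^{\eps_0}\cdots x_N^{a_N}y_N^{\eps_N}\,x_M^{-b_M}\cdots x_0^{-b_0},\qquad a_i,b_j\ge 0,\ \eps_i\in\{0,1\}.$$
One must check that, with three caret types present, this rewriting still terminates and reaches this shape.

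The final and hardest step is to prove that a seminormal form equal to the identity in $\fb$ must be trivial. The obstacle, absent for classical $F$, is that the tree-pair representation is not unique: distinct iterated ternary subdivisions can produce the same partition of $[0,1]$ --- the \emph{hidden cancellations} of \cite[Section 6]{ftau} --- and these coincidences are exactly what $y_k^2=x_kx_{k+1}$ encodes. I would argue in the style of Lemma \ref{vt-identity-lem}: pass to a reduced tree-pair diagram for the identity and read the breakpoints of $\zz[\beta]\cap[0,1]$ from left to right; since the map is the identity and all slopes are powers of $\beta$, matching breakpoints forces the two trees to agree leaf by leaf, so $p=q$ and the word collapses. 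The technical heart is to show that the listed relations account for \emph{all} coincidences among the three ternary caret types, i.e. that every non-reduced diagram can be transformed into the reduced one representing the same element using only these relations; this is where the proof genuinely exceeds the classical case.
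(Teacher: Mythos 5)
First, a point of comparison: the paper does not prove this proposition at all --- it is quoted from J.~Brown's thesis \cite[6.1.1.]{jb} --- so there is no argument in the paper to measure yours against; what follows assesses your outline on its own terms.

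Your plan is the natural one (it mirrors the treatment of \ft\ in \cite{ftau}), but it has a genuine gap at exactly the decisive step. In your final paragraph you pass to a reduced tree-pair diagram for the identity, match breakpoints and slopes, and conclude that ``the two trees agree leaf by leaf, so $p=q$ and the word collapses.'' That inference fails for $\fb$: equal breakpoints force the two \emph{subdivisions} of $[0,1]$ to coincide, not the two \emph{trees}, because distinct iterated ternary subdivisions can realize the same partition --- this is precisely the hidden-cancellation phenomenon you yourself invoke, and it is the reason the relation $y_k^2=x_kx_{k+1}$ exists at all (compare Figure \ref{rel}, where a basic move exchanges two carets for two different carets giving the same subdivision). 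So what your breakpoint argument actually yields is only that $p$ and $q$ are positive words producing the same subdivision. To finish, one must prove that any two positive words producing the same subdivision are equal modulo the listed relations, i.e.\ that these relations account for \emph{all} coincidences among the three ternary caret types. You name this as ``the technical heart'' and defer it, but it is not a technicality to be checked --- it is the entire content of the proposition (everything preceding it is routine), and deferring it makes the argument circular: completeness of the relations is what you are trying to establish, so you cannot use it to collapse the word. Two smaller issues point the same way: the generators of $\fb$ have structurally different trees for even and odd indices (Figure \ref{gens}), so your reading of $x_n,y_n$ as ``subdividing at the $(n+1)$-st leaf'' and the shift-by-two bookkeeping in $a_ib_j=b_ja_{i+2}$ need real verification; and your claim that $y_k^2=x_kx_{k+1}$ suffices to express the third caret type through the other two is asserted, not proved.
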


The generators are represented by the following tree-pairs $(T_{a_i}, S_k),$ where $T_{a_i}$ are as in Figure \ref{gens} ($a \in \{x,y\}$), and $S_k$ is a spine with $\lfloor\frac{i}{2}\rfloor+2$ $x$-carets.

\begin{figure}
\centerline{\includegraphics[width=100mm]{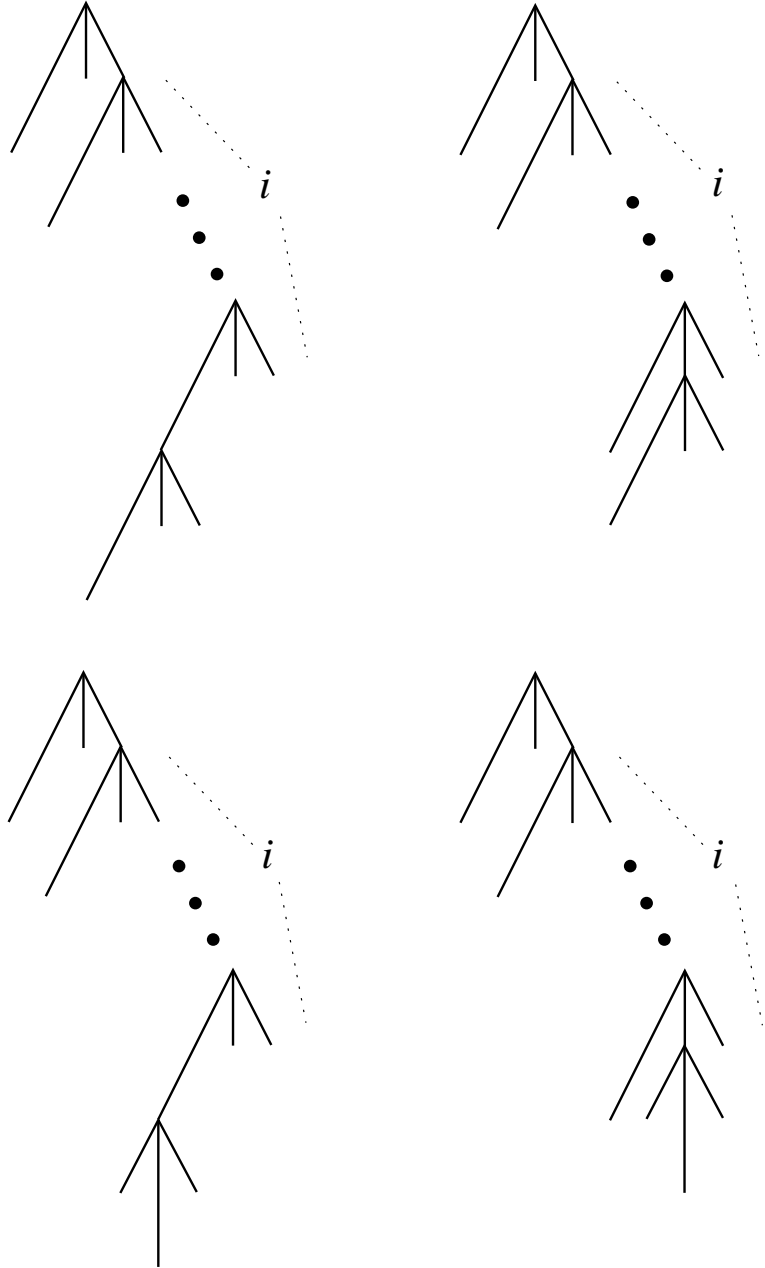}}
\caption{The left-hand trees for the generators. Top left $T_{x_{2i}}$, top right $T_{x_{2i+1}}$, bottom left $T_{y_{2i}}$, bottom right $T_{y_{2i+1}}$}
\label{gens}
\end{figure}

\noindent Analogously to $\vt$ we can now define the group $\vb,$ whose elements are represented by triples $(T, \pi, S)$, where
$T$ and $S$ are trees with the same number of leaves, and $\pi$ is a permutation of leaves.  We now follow the arguments of Section \ref{index2-sec}.

\begin{lem}\label{vb-properties} We have the following:
\begin{enumerate}
\item Every element $v \in \vb$ has an expression as $p\sigma q^{-1}$, where $p$ and $q$ are words in positive powers of $x_i$ and $y_i$ ($i \geq 0$), and $\sigma$ is a permutation of the leaves of a spine with only $x$-carets.
\item In every such expression of $v \in \vb$ as a word in $x_i,y_j, \pi_k,$ ($i,j,k\geq 0$) the parity of the sum of the exponents of the $y_j$ is constant.
\end{enumerate}
\end{lem}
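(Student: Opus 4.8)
The plan is to treat the two parts separately: part (1) is a direct transcription of the \ttt/\vt\ arguments into the $\beta$-setting, while part (2) amounts to constructing a parity homomorphism and reading the invariance off from it.

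For part (1), I would first establish the $\beta$-analogue of Lemma \ref{el-vt-lem}: every $v \in \vb$ is represented by a triple $(T, \pi, S)$ whose range tree $S$ consists only of $x$-carets. Granting this, I would split the diagram into three pieces exactly as in the generating-set argument for \ttt\ and in Proposition \ref{2.3}: insert a common spine $U$ of $x$-carets with the same number $k$ of leaves as $T$ and $S$ in the middle, and write $v = p\,\sigma\,q^{-1}$, where $p$ and $q$ are the order-preserving elements of $\fb$ determined by $T$ and $S$ over $U$, and $\sigma = (U, \pi, U)$ permutes the leaves of the $x$-spine $U$. Since $U$ is all $x$-carets, $p$ and $q$ are positive elements of $\fb$ in the sense of Brown's presentation, hence positive words in the $x_i, y_i$, and $\sigma$ is a permutation of the leaves of an $x$-spine, as required. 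The only thing to verify is that the spines can be inserted compatibly with the permutation, which is routine and was already handled in the \ttt/\vt\ constructions.

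For part (2), the strategy is to exhibit a homomorphism $\varphi : \vb \to \zz/2\zz$ sending each $x_n$ and each permutation generator $\pi_k$ to $0$ and each $y_n$ to $1$, and then to invoke it. Following the arguments of Section \ref{index2-sec}, a presentation of $\vb$ is obtained by adjoining to Brown's presentation of $\fb$ the relations governing how the $\pi_k$ interact with the $x_n$ and among themselves --- these involve no $y$-generators at all --- together with the relations describing how the $\pi_k$ interact with the $y_n$, which by the same combinatorics as in \vt\ carry exactly one $y$ on each side. The key check, and the only place where the $\beta$-setting genuinely differs from \vt, is that each defining relation preserves the parity of the total $y$-exponent: the relations $a_i b_j = b_j a_{i+2}$ with $a,b \in \{x,y\}$ have matching $y$-counts on the two sides, while the only relation altering the $y$-count, $y_k^2 = x_k x_{k+1}$, alters it by $2$. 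Consequently $\varphi$ is well-defined, and for any word $w$ in the $x_i, y_j, \pi_k$ representing $v$ the parity of the sum of the $y$-exponents of $w$ equals $\varphi(v)$, which depends only on $v$; this is precisely the claim.

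The main obstacle is logistical rather than conceptual: part (2) presupposes a complete presentation of $\vb$, whereas the excerpt only signals that one is obtained by following the arguments of Section \ref{index2-sec}. Thus the real work is to confirm that the defining relations of $\vb$ are exactly the $\fb$-relations together with permutation relations of the two types above, so that no relation outside this list can secretly change the $y$-parity. Once this presentation is in hand, the verification is a finite relation-by-relation inspection, and the invariance of the $y$-parity --- hence the existence of the index-two kernel that the subsequent arguments will exploit --- follows at once.
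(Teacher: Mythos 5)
Your proposal matches the paper's own (very terse) proof: part (1) is exactly the ``introduce a spine in the middle and permute its leaves'' argument, and part (2) is the same parity check on relators carried out for $V_\tau$, transported to Brown's presentation of $F_\beta$. Your closing observation --- that the argument really rests on knowing a complete presentation of $V_\beta$, so that no unlisted relation can alter the $y$-parity --- is a fair and more explicit acknowledgement of a step the paper itself only gestures at with ``a check similar to that for $V_\tau$,'' so there is no substantive divergence.
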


\begin{proof} The first statement is immediate by the usual method of introducing a spine in the middle and permuting the leaves there. The second statement is a check similar to that for $\vt$.
\end{proof}

\begin{lem} There is a well defined surjective homomorphism
$$\varphi: \vb \longrightarrow \quot{\zz}{2\zz}$$
given by $\varphi(x_i) = \varphi(\sigma) =0$ and $\varphi(y_i)=1$ ($i \geq 0$), where $\sigma$ is a permutation of leaves.
\qed
\end{lem}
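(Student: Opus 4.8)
The plan is to realise $\varphi$ as the map that reads off the parity of the $y$-content of a word, so that the only substantive point is its well-definedness on $\vb$. Concretely, I would assign $\varphi(x_i)=0$, $\varphi(y_i)=1$, and $\varphi(\sigma)=0$ for each permutation generator $\sigma$, and declare $\varphi(v)$ to be the residue mod $2$ of the total exponent of the $y$-generators in any word representing $v$. Granting well-definedness, the homomorphism property is immediate: a word for $v_1v_2$ is the concatenation of words for $v_1$ and $v_2$, so the $y$-exponent sums add and hence so do their parities in $\quot{\zz}{2\zz}$. Surjectivity follows at once from $\varphi(y_0)=1$.

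Thus the whole content is to show that two words representing the same element of $\vb$ have the same $y$-parity, and this is exactly the assertion of Lemma \ref{vb-properties}(2). I would therefore simply invoke that lemma to conclude that $\varphi$ descends to a well-defined function on $\vb$, which then completes the argument.

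For readers who prefer to see the invariance at the level of a presentation --- the route taken for \ft, \ttt\ and \vt\ --- the same conclusion comes from checking that every defining relator of $\vb$ has even total $y$-exponent. The $\fb$-relators $a_ib_j=b_ja_{i+2}$ carry the identical number of $y$'s on each side, and $y_k^2=x_kx_{k+1}$ has $y$-exponent $2$; the relators involving the permutation generators are either free of $y$'s or, by the analogues of relations (6.1)--(6.4), have a single $y$ on each side. Since $y$-parity is preserved by each relator, the assignment factors through the presentation. This invariance of $y$-parity is the only step requiring genuine verification, and it has already been settled in Lemma \ref{vb-properties}(2); everything else is formal.
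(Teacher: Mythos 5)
Your proposal is correct and takes essentially the same route as the paper: the lemma is stated with an immediate \qed precisely because well-definedness is exactly the content of Lemma \ref{vb-properties}(2), with the homomorphism property and surjectivity being formal, just as you argue. One caveat on your optional second route: the paper never actually writes down a presentation for $\vb$ (only one for $\fb$), so the ``analogues of relations (6.1)--(6.4)'' you invoke are not established there, and the direct appeal to Lemma \ref{vb-properties}(2) is the right primary argument.
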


Analogously to Higman's original argument \cite[Section 5]{higman}, we can also find a subgroup of index-$2$ in $\vb$ given by the sign of the permutation in $(T, \pi, S).$ This sign is unchanged by expansion of the trees, see \cite[Section 5]{higman}, and also by the relations between the different carets, which follows from the fact that the relations do not change the order of the leaves, see Figure \ref{rel}.

\begin{figure}
\centerline{\includegraphics{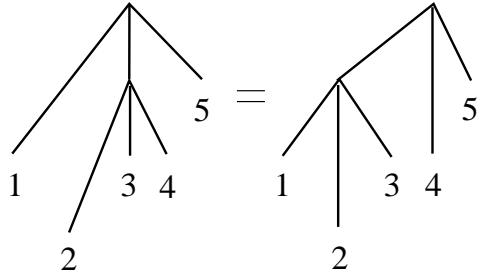}}
\caption{The order of leaves doesn't change when we perform a basic move replacing two carets by another two giving the same subdivision.}
\label{rel}
\end{figure}

\begin{lem}
There is a well-defined subgroup $\vb^+$ of index $2$ in $\vb$ given by the elements of even parity. Hence there is a canonical projection:
$$\rho: \vb \longrightarrow \quot{\vb}{\vb^+} \cong \quot{\zz}{2\zz}.$$
\qed
\end{lem}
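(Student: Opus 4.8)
The plan is to define $\rho$ on a representative triple by the sign of its permutation, $\rho(T,\pi,S) = \operatorname{sgn}(\pi) \in \quot{\zz}{2\zz}$ (reading even permutations as $0$ and odd ones as $1$), and then to verify in turn that this value is independent of the chosen representative, that it is a homomorphism, and that it is surjective. The subgroup $\vb^+$ is then \emph{defined} as $\ker\rho$, and the final isomorphism is immediate from the first isomorphism theorem. The only genuine content is the well-definedness, so I would concentrate the argument there.

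Two triples represent the same element of $\vb$ precisely when they are related by a finite sequence of moves of two kinds: a \emph{basic move} internal to one of the two trees (replacing two carets by two carets of possibly different types that give the same subdivision), and a \emph{simultaneous expansion} (adding a caret to a leaf of $T$ together with the matching caret on the corresponding leaf of $S$). It therefore suffices to check that $\operatorname{sgn}(\pi)$ is unchanged under each kind of move. For a basic move the claim is immediate: as recorded in Figure \ref{rel}, such a move changes neither the number of leaves nor their left-to-right order, so $\pi$ is literally the same permutation afterwards and its sign is unaffected.

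The expansion case is where the hypothesis that the arity is $n=3$, and in particular that it is \emph{odd}, enters, exactly as in Higman's original argument \cite[Section 5]{higman}. Expanding a leaf $i$ of $T$ forces the matching leaf $\pi(i)$ of $S$ to be subdivided in the identical fashion, so the $n$ new domain-leaves map, as an ordered block, onto the $n$ new range-leaves. Counting inversions, every inversion (or non-inversion) that the old leaf $i$ formed with another leaf is replaced by $n$ inversions (resp.\ non-inversions) of the block with that leaf, while the block contributes no internal inversions; hence the number of inversions changes by a multiple of $n-1$. Since $n-1 = 2$ is even, $\operatorname{sgn}(\pi)$ is preserved. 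This is the step I expect to be the main obstacle, in the sense that it is the only place where the numerics (oddness of $n$) are essential and where one must check that expansion in $\vb$, despite the several caret types, still yields an order-preserving block identification.

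Granting well-definedness, the homomorphism property follows from the standard composition of triples: given $g$ and $h$, expand so that the target tree of $g$ agrees with the source tree of $h$ (permissible, since expansions preserve $\rho$), after which the product is represented by a triple whose permutation is the product of the two permutations, and $\operatorname{sgn}$ is multiplicative. Surjectivity is witnessed by any element represented by a triple $(T,\pi,T)$ with $\pi$ a transposition, which has odd sign and so maps to $1$. Finally $\vb^+ = \ker\rho$ is normal of index $|\quot{\zz}{2\zz}| = 2$, and $\quot{\vb}{\vb^+} \cong \quot{\zz}{2\zz}$.
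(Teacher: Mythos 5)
Your proposal is correct and follows essentially the same route as the paper: the paper also defines the parity via $\operatorname{sgn}(\pi)$ on a representing triple $(T,\pi,S)$, notes that basic moves preserve the left-to-right order of leaves (its Figure with the caret relations), and invokes Higman's Section 5 for invariance under expansion, which is exactly the inversion-counting argument you spell out (the change in inversions being a multiple of $n-1=2$ for ternary carets). The only difference is one of detail: the paper cites Higman for the expansion step, whereas you reproduce that computation explicitly, correctly identifying the oddness of the arity as the crux.
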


\begin{thm}
$\vb$ has an index-$4$ normal subgroup.
\end{thm}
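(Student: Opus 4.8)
The plan is to package the two parity invariants already constructed for $\vb$ --- the $y$-parity $\varphi$ and the permutation-sign $\rho$ --- into a single homomorphism onto the Klein four-group. Set
$$\Phi = (\varphi,\rho)\colon \vb \longrightarrow \quot{\zz}{2\zz}\times\quot{\zz}{2\zz}.$$
Since $\varphi$ and $\rho$ are each homomorphisms (by the two preceding lemmas), $\Phi$ is a homomorphism, and its kernel $\ker\Phi = \ker\varphi \cap \ker\rho$ is automatically a normal subgroup of $\vb$ whose index equals the order of the image $\Phi(\vb)$. It therefore suffices to prove that $\Phi$ is surjective; the index-$4$ normal subgroup is then $\ker\Phi$.

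To establish surjectivity I would exhibit elements realising two distinct nontrivial values, since any two of $(1,0)$, $(0,1)$, $(1,1)$ generate $(\quot{\zz}{2\zz})^2$. First, the generator $y_0 \in \fb \subset \vb$, viewed as a triple with trivial permutation, has an odd number of $y$-carets and trivial (hence even) permutation-sign, so $\Phi(y_0) = (1,0)$. Second, a proper transposition $\sigma$ represented by $(T,\sigma,T)$ with $T$ a spine of $x$-carets only contains no $y$-carets and has odd sign, so $\Phi(\sigma) = (0,1)$. These two values generate the Klein four-group, giving $[\vb:\ker\Phi]=4$. Equivalently, one may phrase this as: two index-two subgroups of any group meet in a subgroup of index $4$ exactly when they are distinct, and $y_0 \in \ker\rho \setminus \ker\varphi$ witnesses $\ker\varphi \neq \ker\rho$.

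The only genuine point to verify --- and the main, if modest, obstacle --- is the \emph{independence} of the two invariants: that $\Phi(\vb)$ is the full Klein four-group and not its diagonal $\{(0,0),(1,1)\}$, which is the unique proper subgroup surjecting onto both factors. This is precisely what the two sample elements above rule out. The argument rests only on facts already recorded in the excerpt: that $\varphi$ is a well-defined surjection (from the constancy of the $y$-parity in Lemma \ref{vb-properties}) and that $\rho$ is a well-defined surjection (the permutation sign being unchanged under expansion of trees and under the basic moves, as illustrated in Figure \ref{rel}). Once these are in hand, no further computation is needed.
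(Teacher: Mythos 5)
Your proposal is correct and is essentially the paper's own argument: the paper's proof consists precisely of the map $v \mapsto (\rho(v),\varphi(v))$ onto $\quot{\zz}{2\zz} \times \quot{\zz}{2\zz}$, with the kernel as the index-$4$ normal subgroup. The only difference is that you explicitly verify surjectivity (via $y_0$ and an $x$-caret proper transposition realising $(1,0)$ and $(0,1)$), a detail the paper leaves implicit; your verification is valid.
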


\begin{proof}
We have a surjection $\vb \rightarrow \quot{\zz}{2\zz} \times \quot{\zz}{2\zz}$ given by
$v \mapsto (\rho(v),\varphi(v)).$
\end{proof}

The kernel $K$ of this map is the subgroup given by all even elements, which have an even number of $y_i$ in each expression $v = p\sigma q^{-1}$ where $p$ and $q$ are words in positive powers of $x_i$ and $y_i$ ($i \geq 0$). We suspect that $K$ is simple, but the methods above for $\vt$ do not work, and one would have to follow Higman's original proof of the simplicity of $G_{n,r}^+$.
This is the subject of N. Winstone's thesis \cite{nick}, where Thompson groups for more general algebraic numbers are considered.

\bibliography{pepsrefs}
\bibliographystyle{plain}

\end{document}